%% file: main.tex
\documentclass[a4paper,11pt]{article}
\usepackage[utf8]{inputenc}
\usepackage[T1]{fontenc}
\usepackage[english]{babel}
\usepackage{amsmath,amssymb,amsthm}
\usepackage[short]{optidef}	
\usepackage{tikz}
\usepackage{subcaption}
\usetikzlibrary{positioning}
\usepackage{soul}
\usepackage{tcolorbox}
\usepackage[shortlabels]{enumitem} 
\usepackage{comment}
\usepackage{standalone}
\usepackage{mathtools}
\usepackage{fullpage}
\usepackage{float}
\usepackage{mathdots}
\usepackage{tikz}
\usepackage[subrefformat=parens,labelformat=parens]{subcaption}
\graphicspath{{./Figuras/}} 
\usetikzlibrary{graphs,graphs.standard}
\usetikzlibrary{decorations.pathmorphing,decorations.pathreplacing,calc}
\usetikzlibrary{decorations.pathmorphing}
\usetikzlibrary{positioning,chains,fit,shapes,calc}

\tikzstyle{selected edge} = [draw,line width=5pt,-,red!50]
\tikzstyle{vertex}=[circle,draw,fill=white,minimum size=10pt,inner sep=2pt]
\tikzstyle{black vertex}=[circle,draw,fill=black,minimum size=10pt,inner sep=2pt]
\tikzstyle{red vertex}=[circle,draw,fill=red!90,minimum size=10pt,inner sep=2pt]
\tikzstyle{gray vertex}=[circle,draw,fill=gray!90,minimum size=20pt,inner sep=1pt]
\tikzstyle{trivial}=[circle,draw,fill=black,minimum size=7pt,inner sep=1pt]
\tikzstyle{dummy}=[circle,minimum size=5pt,inner sep=0pt]
\tikzstyle{red edge} = [draw,very thick,-,red!80,decorate,decoration={snake, amplitude=.8pt, segment length=4pt}]
\tikzstyle{purple edge} = [draw,very thick,-,magenta]
\tikzstyle{blue edge} = [draw,very thick,-,blue!60]
\tikzstyle{green edge} = [draw,very thick,-,green!90]
\tikzstyle{brown edge} = [draw,thick,-,brown!90]
\tikzstyle{black edge} = [draw,thick,-,black!90]
\tikzstyle{dot} = [circle,inner sep=1pt,fill,name=#1]
\tikzstyle{extended line} = [shorten >=-#1,shorten <=-#1]
\usetikzlibrary{arrows.meta}

\usepackage{hyperref}
\usepackage{thmtools}
\usepackage{thm-restate}
\tikzstyle{line} = [draw, thick]

\usetikzlibrary{decorations.pathmorphing,decorations.pathreplacing}

\definecolor{myblue}{RGB}{80,80,160}
\definecolor{mygreen}{RGB}{80,160,80}
\definecolor{myred}{RGB}{255,0,0}
\definecolor{mybrown}{RGB}{139,69,19}

\newtheorem{theorem}             {Theorem}
\newtheorem{lemma}     	[theorem] {Lemma}        
\newtheorem{conjecture}	[theorem] {Conjecture}

\newtheorem{observation}[theorem] {Observation}

\newtheorem{claim}{Claim}

\newtheoremstyle{case}{}{}{}{}{\bfseries}{:}{ }{}
\theoremstyle{case}

\numberwithin{subcase}{case}

\begin{document}

\title{Improved Domination–Packing Bounds in Claw-Free Cubic Graphs and Unit Disk Graphs}

\author{Kaustav Paul\thanks{ Tel Aviv Univesity, Israel.\\
E-mail: {\tt  imkaustav7@gmail.com}} \and Juan Gutierrez\thanks{Faculty of Computing, University of Engineering and Technology, Peru.\\E-mail: {\tt jgutierreza@utec.edu.pe}}}

\author{Juan Gutierrez\thanks{Faculty of Computing, University of Engineering and Technology, Peru. E-mail: {\tt jgutierreza@utec.edu.pe}. Partially supported by Fondo semilla UTEC 2025.}
\and Kaustav Paul\thanks{ Tel Aviv Univesity, Israel.
E-mail: {\tt  imkaustav7@gmail.com}}}

\date{}
\maketitle

\begin{center}
\footnotesize

\bigskip

Departamento de Ciencia de la Computación\\ 
Universidad de Ingeniería y Tecnología (UTEC), Lima, Perú\\
E-mail: \texttt{jgutierreza@utec.edu.pe}

Departamento de Ciencia de la Computación\\ 
Universidad de Ingeniería y Tecnología (UFABC), Lima, Perú\\
E-mail: \texttt{jgutierreza@utec.edu.pe}

Tel Aviv Univesity, Israel\\
E-mail: \texttt{imkaustav7@gmail.com}

\end{center}

\maketitle

\begin{abstract}
Given a graph $G$, the domination number $\gamma(G)$ is the minimum cardinality of a dominating set in $G$, and the packing number $\rho(G)$ is the maximum cardinality of a set of vertices that are pairwise at distance at least $3$. The ratio between these parameters has been widely studied in several graph classes. It is known that $\gamma(G) \le 2\rho(G)$ for claw-free subcubic graphs, up to finitely many exceptions, and that $\gamma(G) \le 32\rho(G)$ for unit disk graphs.
In this paper, we improve the latter bound by showing that $\gamma(G) \le 16\rho(G)$ for a unit disk graph $G$. For the former bound, we show that it can be improved in the cubic bridgeless setting; more precisely, every bridgeless claw-free cubic graph $G$ satisfies $\gamma(G) \le \frac{7}{4}\rho(G) + \frac{5}{6}$.
These results are not tight. In fact, we give example of an infinite family of bridgeless cubic graphs $G$ with $\gamma(G) = 5\rho(G)/4$ and an infnite family of unit disk graphs $G$
in which $\gamma(G) = 3\rho(G)$.
\end{abstract}

\section{Introduction and Preliminaries}

\input{intro.tex}



\section{Unit Disk graphs}

\input{UDG}

\section{Claw free cubic graphs}
\input{claw-free-cubic.tex}
\input{claw-free-cubic-family.tex}



\bibliographystyle{abbrv}
\bibliography{refer}
\end{document}

%% file: intro.tex
Let $G=(V,E)$ be a graph. For any vertex $u \in V$, we denote by $N(u)$ the neighborhood of $v$, and set $N[u]=\{u\} \cup N(u)$. Also, $N_i(u)$ is the set of vertices at distance $i$ from $u$ and we put $N_{\leq i}(u) = N[u] \cup \cdots \cup N_i(u)$, that is, the set of vertices at distance at most $i$ from $u$.
These definitions extend naturally to sets $S\subseteq V$. That is, $N(S) = \bigcup_{u \in S} N(u), N[S]=S \cup N(S), 
N_i(S) = \bigcup_{u \in S} N_i(u)$, and
$N_{\leq i}(S) = N[S] \cup \cdots \cup N_i(S)$.

A set $D \subseteq V$ is a \emph{dominating set} if every vertex of $G$ is either in $D$ or adjacent to a vertex in $D$. Equivalently, if $N[D]=V(G)$. A set $P \subseteq V$ is a \emph{packing} if $N[u] \cap N[v] = \emptyset$ for any distinct $u,v \in P$. Equivalently, if every pair of vertices in $P$ are at distance at least 3.  The \emph{domination number} $\gamma(G)$ is the minimum size of a dominating set, and the \emph{packing number} $\rho(G)$ is the maximum size of a packing.

The relationship between $\gamma(G)$ and $\rho(G)$ has been extensively studied. The inequality $\gamma(G) \geq \rho(G)$ holds and Henning et al.\ showed that $\gamma(G) \leq  \Delta(G) \rho(G)$, where $\Delta(G)$ is the maximum degree of $G$ \cite{Henning2011}.
It is known that $\gamma(G)= \rho(G)$ holds if $G$ is a tree \cite{MeirM75}, a strongly chordal graph \cite{Farber84}, or a dually chordal graph \cite{BrandstadtCD98}.
It is also known that $\gamma(G) \leq 2\rho(G)$ holds if $G$ is a cactus graph \cite{LowensteinRR13} or a biconvex graph \cite{Gomez2025}.
Regarding planar graphs, it was shown that
$\gamma(G) \leq 3 \rho(G)$ for any maximal outerplanar $G$. Such bound was later improved to $2$ \cite{bonamy2025}.
For arbitrary planar graphs, it was shown that $\gamma(G) \leq 10 \rho(G)$, a bound later improved to $7$ \cite{ducz2026}. 

It was recently shown that $\gamma(G) \le 32\rho(G)$ for unit disk graphs \cite{bonamy2025}. 
In this paper we improve this result as follows

\begin{restatable}{theorem}{UnitDiskTheorem}\label{th:UDG_main}
Given a unit disk graph $G$, $\gamma(G)\leq 16\cdot \rho(G)$.
\end{restatable}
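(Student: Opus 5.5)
Take a maximum packing $P=\{p_1,\dots,p_k\}$ of $G$, with $k=\rho(G)$, and work with a fixed disk representation: vertices are points in the plane, adjacent exactly when their distance is at most $1$. By maximality of $P$, every vertex $v$ satisfies $N[v]\cap N[p_i]\neq\emptyset$ for some $i$, so graph distance is at most $2$. Hence
\[
V(G)=\bigcup_i N_{\le 2}(p_i).
\]
It therefore suffices to assign to each $p_i$ at most $16$ vertices that dominate a suitable part of $N_{\le 2}(p_i)$, with these parts covering $V(G)$. The union of the assigned sets is then a dominating set of size at most $16k$.

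\textbf{Step 1: a simple but weak bound.} Every vertex at graph distance at most $2$ from $p_i$ lies in the Euclidean disk of radius $2$ around $p_i$. Cover this disk by small cells of diameter at most $1$; inside each cell the vertices form a clique, so one vertex per nonempty cell dominates that cell. This already gives a constant, but covering a radius-$2$ disk by unit-diameter cells needs roughly $20$ or more cells. So the naive argument gives about $20$ or more, not $16$, and the real work is in exploiting more structure.

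\textbf{Step 2: use the structure around $p_i$.} A vertex $v\in N_2(p_i)$ has a neighbor $u\in N(p_i)$. So I would dominate $N_{\le 2}(p_i)$ by choosing vertices inside $N[p_i]$ and near it, rather than arbitrary points in the radius-$2$ disk. Concretely:
\begin{enumerate}[(a)]
\item Partition the unit disk around $p_i$ into angular sectors.
\item Observe that a vertex $u\in N(p_i)$ lying far out in a sector dominates a large lens of the annulus $1<|x-p_i|\le 2$ in that direction.
\item Choose in each sector the neighbor of $p_i$ that is extremal, that is, farthest from $p_i$.
\item Argue that any $v\in N_2(p_i)$ whose connecting neighbor $u$ lies in that sector is within distance $1$ of either the chosen extremal vertex or of a vertex selected for a small cell in an inner region.
\end{enumerate}

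\textbf{Step 3: count.} The target is, for example, $6$ sectors, each handled by at most $2$ chosen vertices (an extremal neighbor plus one extra for the angular spread). That gives $12$. Add up to $4$ vertices covering the inner region (which includes $p_i$ itself). The total is $16$ per packing vertex.

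\textbf{Main obstacle.} The hard part is the geometric lemma in Step 2. A single extremal neighbor need not dominate all second-neighbors reached through its sector, because the sector has angular width and vertices near its boundary can escape. I would first try $8$ sectors of angle $\pi/4$ with two chosen vertices each: the farthest neighbor and one boundary-extremal neighbor per sector. The goal is a law-of-cosines estimate showing that any point within distance $1$ of some sector neighbor is within distance $1$ of one of the two chosen ones.

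If that fails, the fallback is to charge only the vertices of $N_2(p_i)$ not already dominated via a neighboring packing vertex. This uses the fact that the disks $N[p_i]$ are pairwise disjoint as vertex sets, which may allow overlapping regions between different $p_i$ to be attributed to only one of them.
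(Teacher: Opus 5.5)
\textbf{There is a genuine gap.} Your reduction to $V(G)=\bigcup_i N_{\le 2}(p_i)$ is fine. However, the bound $16=6\cdot 2+4$ rests entirely on the geometric lemma of Step 2(d), which you do not prove, and in the form you state it, it is false. Put $p_i$ at the origin, place neighbors $u_1,\dots,u_m$ on the unit circle at distinct angles inside one sector, and add vertices $v_k=2u_k$. If a point $w$ satisfies $\|w\|\le 1$ and $\|w-v_k\|\le 1$, then $2=\|v_k\|\le \|w\|+\|w-v_k\|$ forces $w=u_k$. So among the vertices of $N[p_i]$ (your extremal neighbors, and any inner-region vertex within distance $1$ of $p_i$), the only one dominating $v_k$ is $u_k$ itself. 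Two chosen neighbors plus four inner vertices therefore dominate at most six of the $v_k$, while $m$ is arbitrary. Now take the sector of angle at most $\pi/4$ with the $u_k$ dense. Any two of the $v_k$ are then at graph distance at most $2$, through a $v_\ell$ near the middle of the arc, since $4\sin(\pi/16)<1$. The graph has diameter $2$, so $\rho(G)=1$ and $\{p_i\}$ is a maximum packing. There is no other packing vertex to which the $v_k$ could be charged, so your fallback does not rescue the count either.

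The example shows that second neighbors near the outer boundary of $N_{\le 2}(p_i)$ must be dominated from the annulus $1<\|x-p_i\|\le 2$ itself, all the way around it. Once you allow that, you are back to covering the annulus by clique cells, which is essentially your Step 1 count. Covering the circle of radius $2$ around $p_i$ alone already takes at least $13$ cells, since a set of diameter $1$ meets it in an arc of angle at most $2\arcsin(1/4)<\pi/6$. Nothing in the proposal brings the total down to $16$.

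The missing idea is to reverse the direction of the charging: rather than building a dominating set around a maximum packing, take one explicit dominating set and split it into $16$ packings. Tile the plane by squares of side $1/\sqrt{2}$; these have diameter $1$, so the vertices in each square form a clique. Let $D$ contain one vertex from each nonempty square, and give square $(i,j)$ the color $(i \bmod 4,\, j \bmod 4)$. Two distinct squares of the same color have an index differing by at least $4$, so their points are more than $3/\sqrt{2}>2$ apart and have disjoint closed neighborhoods. Thus each color class of $D$ is a packing, and $\gamma(G)\le |D|\le 16\rho(G)$, with no local geometric lemma required.
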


The bound in Theorem \ref{th:UDG_main} is not tight, however we find an infinite family of graphs that attain a factor of $3$.

\begin{restatable}{theorem}{UnitDiskFamily}\label{theorem:UDG_family}
 There exists an infinite family of unit disk graphs $G$ with $\gamma(G)= 3 \rho(G)$.
\end{restatable}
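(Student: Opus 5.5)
The plan is to reduce Theorem~\ref{theorem:UDG_family} to a single finite gadget and then take disjoint copies of it. Suppose $H$ is a unit disk graph with $\rho(H)=1$ and $\gamma(H)=3$. For $k\ge 1$, let $G_k$ consist of $k$ copies of a unit disk realization of $H$, translated so that any two copies are at Euclidean distance greater than $1$. Then $G_k$ is a unit disk graph whose components are exactly the $k$ copies of $H$.

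Both parameters are additive over connected components. For $\gamma$ this holds because a dominating set restricts to a dominating set of each component. For $\rho$ it holds because two vertices in different components are at infinite distance, so a union of packings of the components is again a packing. Therefore $\gamma(G_k)=3k$ and $\rho(G_k)=k$, which gives the infinite family with $\gamma(G_k)=3\rho(G_k)$. Everything therefore rests on constructing $H$.

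For the gadget $H$ we need two properties.
\begin{enumerate}[(i)]
\item $\rho(H)=1$, which holds as soon as $H$ has diameter at most $2$. To check it, I verify for every non-adjacent pair $u,v$ that $N(u)\cap N(v)\neq\emptyset$.
\item $\gamma(H)\ge 3$. This means that for every pair $x,y$ there is a vertex outside $N[x]\cup N[y]$. Exhibiting an explicit dominating set of size $3$ then gives equality.
\end{enumerate}

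My first attempt is a rotationally symmetric point set. Place an outer ring of $m$ points on a circle of radius $r$ slightly below $1$, and an inner ring of $m$ points on a smaller concentric circle, rotated by half a step. The radii are tuned so that:
\begin{itemize}
\item each vertex sees only a short arc of the opposite ring and of its own ring;
\item nevertheless, any two points have a common neighbour, because two points at distance at most $2$ have a point within unit distance of both near their midpoint, and the rings must be dense enough to contain such a point.
\end{itemize}
The symmetry reduces the check of (ii) to pairs $x,y$ up to rotation, that is, to $O(m)$ cases. Each case is handled by showing that the union of the two closed neighbourhoods misses some arc of the outer ring.

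I expect finding and certifying $H$ to be the main obstacle, because (i) and (ii) pull in opposite directions. Diameter at most $2$ forces neighbourhoods to be large, while $\gamma\ge 3$ forces every two neighbourhoods to leave something uncovered.

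I would first try small parameters, say $m\in\{6,7,8,9\}$, and explicitly compute the adjacency from the coordinates. If the two-ring design fails, the fallback is a computer-assisted search over small point configurations, for example near-regular points on two or three concentric circles or subsets of a fine triangular lattice. Any configuration the search finds can then be certified by hand by listing, for each pair, a common neighbour and, for each pair, an undominated vertex.
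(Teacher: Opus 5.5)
Your reduction step is sound. Placing $k$ translated copies of a realization of $H$ at mutual distance greater than $1$ gives a unit disk graph with $\gamma=3k$ and $\rho=k$. That is a perfectly good substitute for the paper's blow-up of one vertex of $H$ into a clique of true twins; it even lets $\rho$ grow, at the price of connectivity.

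\textbf{The gap.} The gadget $H$ itself is never produced. The entire content of the theorem is the existence of one unit disk graph with $\rho(H)=1$ and $\gamma(H)=3$. Your proposal stops at a search plan: it gives no coordinates, no adjacency, and no verification of (i) or (ii), so as written nothing has been proved. The paper supplies exactly this missing object, an explicit $12$-point configuration:
\begin{itemize}
\item three pairs $\{a,a'\},\{b,b'\},\{c,c'\}$ near the corners of an equilateral triangle of side $2$;
\item six points $x_0,x_1,y_0,y_1,z_0,z_1$ near the side midpoints, which form a clique;
\item each middle vertex is adjacent to both vertices of one corner pair and to exactly one vertex of another; for instance $x_0$ sees $a,a',b$ and $x_1$ sees $a,a',b'$.
\end{itemize}
Any two corner vertices from different pairs share a middle neighbour, so the diameter is $2$. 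Each corner vertex dominates only itself and its partner among the six corner vertices, and each middle vertex dominates exactly three of them. No two middle vertices have complementary corner neighbourhoods, so no pair dominates and $\gamma=3$.

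\textbf{Your first ansatz cannot succeed.} For even $m$, your two-ring configuration is centrally symmetric, since rotation by $\pi$ is a power of the rotation by $2\pi/m$. Central symmetry forces $\gamma\le 2$ as soon as the diameter is at most $2$, whatever the radii:
\begin{itemize}
\item Let $P$ be a point of maximum norm $R$. If $2R\le 1$, the graph is complete.
\item Otherwise $P$ and $-P$ have a common neighbour $w$. Then $2\|w\|^2+2R^2=\|w-P\|^2+\|w+P\|^2\le 2$, so $\|w\|^2\le 1-R^2$.
\item For every vertex $v$, $\min\{\|v-w\|^2,\|v+w\|^2\}=\|v\|^2+\|w\|^2-2|\langle v,w\rangle|\le R^2+(1-R^2)=1$. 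Hence $\{w,-w\}$ dominates.
\end{itemize}
A short case check on which angular offsets are edges shows that $m=7$ fails in the same way. Either two inner vertices three steps apart dominate, or an outer vertex together with the inner vertex opposite it does.

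This is your midpoint heuristic turning against you. A common neighbour of two nearly antipodal points sees roughly half of everything, and a symmetric copy of it sees the other half. The paper's configuration escapes this because no vertex sees more than three of the six corner vertices, and the perturbations are chosen so that no two middle vertices see complementary triples.
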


This motivates the following conjecture.

\begin{restatable}{conjecture}{UnitDiskConjecture}\label{conj:UDG_conjecture}
For every unit disk graph \(G\), $\gamma(G)\le 3\rho(G)$.
\end{restatable}

If Conjecture \ref{conj:UDG_conjecture}, the constant \(3\) would be tight, by the family constructed above.

We now turn our attention to subcubic graphs, that is, graphs $G$ with $\Delta(G) \leq  3$.
Henning et al. conjectured that if $\Delta(G) \leq 3$, then $\gamma(G) \leq 2 \rho(G)$, except for exactly three exceptions.
 This conjecture was disproved by Bonamy et. al \cite{bonamy2025}, who exhibited an infinite family of graphs $G$ with $\Delta(G) =3$ but $\gamma(G) =2 \rho(G) +1$.
However, they showed the next.

\begin{theorem}[{\cite{Henning2011}}]
\label{th:cubicclawfree-henning}
    If $G$ is claw-free and $\Delta(G) \leq 3$, then $\gamma(G) \leq 2 \rho(G)$.
\end{theorem}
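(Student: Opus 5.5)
We use the standard greedy scheme: take a maximal packing and dominate locally around it. Fix a maximum packing $P$, so $|P|=\rho(G)$. By maximality of $P$, every vertex $v$ lies within distance $2$ of some $p\in P$; otherwise $P\cup\{v\}$ would be a larger packing. Hence $V(G)=\bigcup_{p\in P} N_{\le 2}(p)$.

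It therefore suffices to show that, in a claw-free graph with $\Delta(G)\le 3$, each ball $N_{\le 2}(p)$ can be dominated by two vertices. If for every $p\in P$ we find vertices $d_1(p),d_2(p)$ with $N_{\le2}(p)\subseteq N[\{d_1(p),d_2(p)\}]$, then $\bigcup_p\{d_1(p),d_2(p)\}$ is a dominating set of size at most $2|P|$.

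\textbf{Local claim.} The local claim is the main work and needs a case analysis on the structure of $N(p)$. Claw-freeness forces $N(p)$ (of size at most $3$) to contain an edge whenever $\deg(p)=3$. The cases are as follows.
\begin{enumerate}
\item If $\deg(p)\le 2$, write $N(p)=\{a,b\}$. Vertex $a$ has at most two neighbours besides $p$, and by claw-freeness at $a$ those outer neighbours are adjacent to each other or to $p$'s other neighbour. Choosing $d_1=a$ and $d_2=b$ dominates $N[p]$ and all of $N_2(p)$, since every vertex of $N_2(p)$ is adjacent to $a$ or $b$. So this case is immediate.
\item If $\deg(p)=3$, let $N(p)=\{a,b,c\}$ with $ab\in E$. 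Vertices $a$ and $b$ each have at most one neighbour outside $\{p,a,b\}$, while $c$ may have two. Take $d_1=c$, which dominates $p$, $c$ and $c$'s outer neighbours. Then $a$, $b$ and their at most two outer neighbours $a',b'$ remain. If $a'=b'$, take $d_2=a'$. Otherwise, take $d_2=a$, which covers $a,b,a'$ and leaves $b'$.
\end{enumerate}

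\textbf{Main obstacle.} The hard step is this leftover vertex $b'$ in the degree-$3$ case. To handle it, I would apply claw-freeness at $a$ and $b$, and consider whether $c$ is adjacent to $a$ or $b$. Write $N(c)=\{p,c_1,c_2\}$. Claw-freeness at $c$ forces $c_1c_2\in E$ unless $c$ is adjacent to $a$ or $b$.

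I would then consider alternative pairs: $\{a,b\}$, which dominates $N[p]\cup\{a',b'\}$ but leaves $c_1,c_2$, or $\{c_1\text{ or }c,\ x\}$ with $x$ chosen to cover the rest. The plan is to show that one of the pairs $\{c,a\}$, $\{c,b\}$, $\{a,b\}$, $\{c_1,a\}$ works, using the triangle $c c_1 c_2$. For instance, if $c_1c_2\in E$ then $\{a,b\}$ misses $c_1$ and $c_2$, while $\{c_1,\cdot\}$ covers $c,c_1,c_2$.

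The local argument might fail in some configuration. In that case the fallback is a global charging argument: vertices of $N_2(p)$ that a pair misses are themselves adjacent to $N_2(p')$ of another packing vertex $p'$, and they can be charged there.

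The detailed check of these few configurations is the expected crux. Everything else, namely maximality of $P$ and taking the union of the local dominators, is routine.
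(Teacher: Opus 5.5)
The paper gives no proof of this statement (it is quoted from Henning, L\"owenstein and Rautenbach), so your proposal has to stand on its own. It has a genuine gap: the local claim, that every ball $N_{\le 2}(p)$ in a claw-free graph with $\Delta\le 3$ is dominated by two vertices, is false, even when $p$ lies in a maximum packing. Your reduction to that claim is fine, and so is the case $\deg(p)\le 2$.

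Here is a counterexample. Take a triangle $pab$, a triangle $cc'c''$ joined by the edge $pc$, a triangle $a'x_1x_2$ joined by $aa'$, and a triangle $b'y_1y_2$ joined by $bb'$. This graph is claw-free with maximum degree $3$. Its vertices are partitioned into four triangles, and each of $p,a,b$ is within distance $2$ of an entire other triangle. Hence $\rho=3$, and $P=\{p,x_1,y_1\}$ is a maximum packing. Now $N_{\le 2}(p)=\{p,a,b,c,a',b',c',c''\}$, and one of any two dominators must lie in $N[p]$. If it is $p$, $a$, $b$ or $c$, the other would have to dominate both vertices of $\{a',c'\}$, $\{c,b'\}$, $\{c,a'\}$ or $\{a',b'\}$ respectively. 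Each of these pairs is at distance at least $3$, so this is impossible.

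This is exactly your leftover-$b'$ configuration: $c$ has two outer neighbours, $a'\neq b'$, and there are no edges between $\{a',b'\}$ and $\{c',c''\}$. All your candidate pairs $\{c,a\},\{c,b\},\{a,b\},\{c',a\}$ fail here, so no further local case analysis can close it.

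What is missing is therefore the real content of the theorem. Taking all of $N(P)$ already gives $\gamma\le 3\rho$. The point is to save one vertex per packing vertex, and the example shows this cannot be done ball by ball.

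In this example the theorem itself is not threatened. The other maximum packing $\{x_1,y_1,c'\}$ has every ball dominated by a single vertex, and $\{a',b',c\}$ dominates the whole graph. That points to two possible repairs:
\begin{itemize}
\item choose $P$ extremally (for instance via an exchange argument that replaces bad vertices such as $p$), and prove that such a choice avoids every bad configuration; or
\item give a genuine global count in which balls needing three dominators are paid for by balls needing fewer.
\end{itemize}
Your fallback sentence about charging names the second option. It does not say where a missed vertex is charged, or why the total stays at most $2|P|$, and that is precisely the nontrivial step.
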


More results on subclasses of graphs $G$ with
$\Delta(G) \leq 3$ were recently proven by
Henning et al. \cite{Henning2026}.
In this paper we focus on bridgeless claw-free cubic graphs, improving Theorem \ref{th:cubicclawfree-henning} as follows.

\begin{restatable}{theorem}{CubicClawFreeTheorem}\label{theorem:cubic-claw-free}
For every bridgeless claw-free cubic graph $G$, 
$
\gamma(G) \leq \frac{7}{4}\rho(G)+\frac{5}{6}.
$
\end{restatable}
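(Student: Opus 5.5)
We use the standard structure of claw-free cubic graphs. Every vertex of a claw-free cubic graph lies in a triangle. If $G\neq K_4$, then $V(G)$ partitions into \emph{diamonds} ($K_4$ minus an edge) and triangles not contained in diamonds. Bridgelessness rules out pendant-type configurations and forces strings of diamonds to close up; the simplest extreme case is a ring of diamonds. The plan is to contract each unit (diamond string or lone triangle) to a single node. This gives an auxiliary multigraph $H$ whose nodes are these units and whose edges are the connecting edges of $G$. Since $G$ is bridgeless, $H$ is $2$-edge-connected. Diamond strings have degree $2$ in $H$ and triangles have degree $3$.

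\textbf{Dominating set.} We build a dominating set greedily, unit by unit. A string of $k$ diamonds is dominated by roughly $k$ vertices: one degree-$3$ vertex from each diamond, chosen so that the vertices linking consecutive diamonds are also covered. A triangle is dominated by one of its vertices, and that vertex also dominates one external neighbour. Let $t$ be the number of triangle units and $d$ the total number of diamonds. Then $n=3t+4d$, and $\gamma(G)\le t+d$ after some sharing across edges of $H$. Globally, one would also use the known bound $\gamma\le n/3$ for claw-free cubic graphs with small exceptions. If a cleaner bound is needed, a better one is $\gamma\le 5n/14$ or Kostochka–Stocker type bounds. The aim is to express $\gamma(G)$ as a weighted count over units.

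\textbf{Packing.} We construct a packing by choosing one vertex in "independent" units. In a diamond string we can take alternate diamonds, using the two degree-$2$ vertices' diamonds. Distances within a string give about $\lceil k/2\rceil$ packing vertices per string of $k$ diamonds. For triangles we need units far apart in $H$. We will take a large set of triangle-units $S$ such that the chosen vertices are pairwise at distance $\ge 3$ in $G$. Choosing the vertex of a triangle that points to a chosen direction allows two triangle units adjacent in $H$ to both contribute, provided their chosen vertices avoid the connecting edge. The key lemma is then a discharging/weighting statement: we assign each unit a weight $w$ with $\sum w \le \frac74\rho(G)+\frac56$ and $\sum w\ge \gamma(G)$. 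We prove this by taking a maximum packing $P$ and showing that every vertex of $P$ can be charged for at most $7/4$ dominating vertices. The additive $5/6$ absorbs small graphs and parity (e.g.\ $K_4$, prism, short diamond rings: a ring of $k$ diamonds has $\gamma=k$ and $\rho=\lfloor k/2\rfloor$-ish, which must be checked and may force special handling).

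\textbf{Main obstacle.} The hard step is the charging argument on the triangle part. There, $H$ restricted to triangles is a $3$-regular $2$-edge-connected multigraph, and we need a packing in $G$ of size about $\frac{4}{7}\gamma$. I would first try to take $P$ maximal and let $D=N[P]$-based augmentation. Every vertex outside $N_{\le2}[P]$ would contradict maximality, so $V=N_{\le 2}[P]$. By claw-freeness and cubicity, $|N_{\le 2}(v)|\le 1+3+4$ with triangle overlaps. The task is then to show that $N_{\le2}[v]$ can be dominated by $7/4$ vertices on average when sharing between neighbouring packing vertices is accounted for, using bridgelessness to guarantee that second neighbourhoods overlap. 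Expected casework covers the local configuration around each $v\in P$: $v$ in a diamond versus in a triangle, and how many of the second-neighbourhood vertices are shared.

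This is only a heuristic outline; the diamond-ring example above may violate the stated bound, so the small cases should be checked first.
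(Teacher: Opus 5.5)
Your proposal has a genuine gap at the step that carries the theorem. The weighting/charging lemma giving $\gamma(G)\le\frac{7}{4}\rho(G)+\frac{5}{6}$ is announced but never proved, and the route you sketch cannot supply it.

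A maximal packing $P$ with $V=N_{\le 2}(P)$ and $|N_{\le 2}(v)|\le 8$ (better: at most $6$ new vertices per greedy step) yields a \emph{lower} bound on $\rho$, essentially $\rho\ge\frac{n-2}{6}$. It does not bound $\gamma$. A full ball $N_{\le 2}[v]$ has $8$ vertices and every closed neighbourhood has $4$, so ``$7/4$ dominators per ball on average'' would have to come entirely from sharing between balls, which you never quantify. The global bounds you cite do not help either: $\gamma\le n/3$ together with $\rho\ge\frac{n-2}{6}$ gives only ratio $2$, and $5n/14$ is weaker than $n/3$, not better.

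What is actually needed is $\gamma\le\frac{7n}{24}+O(1)$ on the triangle part. The paper obtains this from a global construction in the cubic multigraph $H$ underlying Oum's characterization. Let $G'$ be $H$ with each vertex replaced by a triangle, and let $Q$ be a packing of $H-S$, where $S$ is the set of vertices on $2$-cycles. For each $a\in Q$, take the three vertices outside $t(a)$ adjacent to $t(a)$. Add one vertex from each triangle $t(u)$ with $u\notin N_H[Q]$. This dominates $G'$ with $\frac{n}{3}-|Q|$ vertices. Favaron's greedy on the subcubic multigraph $H-S$ gives $|Q|\ge\frac{n/3-|S|-2}{8}$. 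The claw-free greedy gives $\rho(G')\ge\frac{n-2}{6}+\frac{|S|}{10}$, where the house-graph bonus pays for discarding $S$. Eliminating $n$ gives exactly $\frac{7}{4}\rho(G')+\frac{5}{6}$.

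Your handling of diamonds is also wrong, in a way that would break the bound. In a string of $k$ diamonds you can take one vertex from \emph{every} diamond, not from alternate ones. For example, take the end vertex of each diamond on the side of $u$. Consecutive choices are at distance $3$, and starting next to $u$ keeps the string compatible with $v$ already being in the packing. Each diamond forces its own dominator, since its two degree-$3$ vertices have their closed neighbourhoods inside the diamond. So your count of $\lceil k/2\rceil$ packing vertices per string would certify only ratio about $2$ on graphs with long strings. The paper's induction relies on the correct count: each inserted diamond adds one vertex to the dominating set and one to the packing. This preserves $|D|\le\frac{7}{4}|P|+\frac{5}{6}$ because $1\le\frac{7}{4}$.

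Likewise, a ring of $k$ diamonds is not a potential counterexample but the easiest case. One degree-$3$ vertex per diamond is simultaneously a dominating set and a packing, so $\gamma=\rho=k$.
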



The bound in Theorem \ref{theorem:cubic-claw-free} is not tight.
However, we construct an infinite family of bridgeless cubic claw free graphs that attain a factor of $5/4$.

\begin{theorem}
    There exists an infinite family of cubic bridgeless claw-free graphs $G$ with ${\gamma(G)= \frac{5}{4} \rho(G)}$.
\end{theorem}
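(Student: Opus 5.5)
We construct an explicit infinite family built from triangle-replaced graphs, compute $\rho$ exactly, and squeeze $\gamma$ between a counting lower bound and an explicit dominating set. The natural building blocks in a claw-free cubic bridgeless graph are triangles and diamonds ($K_4$ minus an edge). We take a ``necklace''-type construction: a cycle of $k$ identical gadgets $H_1,\dots,H_k$, each gadget a small bridgeless claw-free piece with two outgoing edges, joined cyclically. The gadget is chosen so that, per gadget, the domination cost is $5$ and the packing contribution is $4$, with $k$ a multiple of a small period if needed. A first candidate to test is the gadget obtained by chaining diamonds and triangles, e.g.\ a string of two diamonds separated by a triangle. We will adjust the gadget (possibly using blocks of several consecutive gadgets) until a local count gives exactly $5:4$. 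Connecting gadgets by single edges in a cycle keeps the graph bridgeless, since every edge lies on the long cycle or inside a 2-connected gadget. Claw-freeness holds because every vertex lies in a triangle containing two of its three edges.

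The proof has three steps. \emph{First}, we exhibit a packing of size $4k$ by choosing, in each gadget, a fixed set of four pairwise far vertices. We check that the choices in consecutive gadgets stay at distance at least $3$ across the connecting edge, which may require alternating between two patterns. \emph{Second}, we show $\rho(G)\le 4k$ by a partition argument. We partition $V(G)$ into $4k$ sets, each of diameter at most $2$ (each contained in some closed neighbourhood $N[v]$, e.g.\ a diamond together with an attached vertex, or a triangle). Since a packing meets each such set at most once, $\rho(G)\le 4k$. \emph{Third}, we bound $\gamma$ from both sides. We give an explicit dominating set with $5$ vertices per gadget, so $\gamma(G)\le 5k$. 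For $\gamma(G)\ge 5k$ we use a discharging argument: each vertex dominates at most $4$ vertices, and we show a dominating set must have average at least $5$ vertices per gadget. We do this by bounding how much each gadget can be dominated from outside, since only the two attachment vertices can be dominated externally. This yields $|D\cap V(H_i)|\ge$ (gadget size $-2$)$/4$ with appropriate rounding. Combining with $\rho(G)=4k$ gives $\gamma(G)=5k=\frac54\rho(G)$.

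The main obstacle is the lower bound $\gamma(G)\ge 5k$. Pure volume counting ($\gamma\ge n/4$) will typically fall short, because inside diamonds and triangles closed neighbourhoods overlap heavily. So we need a genuine local argument. We will first try a charging scheme in which each gadget must contain at least a fixed number of dominators, counted via its internal diamonds: each diamond's two degree-$3$-internal vertices must be dominated from within the diamond. If that rounding loses a fraction, we will pair adjacent gadgets and prove that every two consecutive gadgets together contain at least $10$ dominators, by a finite case analysis on which attachment vertices are in $D$.
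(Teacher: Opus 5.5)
\textbf{Genuine gap.} What you wrote is a plan, not a proof, and the missing piece is the whole content of the statement. An existence claim needs an explicit graph with exactly computed $\gamma$ and $\rho$, but you never fix the gadget (``chosen so that\dots'', ``we will adjust the gadget\dots until a local count gives exactly $5:4$'').

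The one concrete candidate is not well formed. A vertex set of a cubic graph with exactly two outgoing edges has internal degree sum $3|V|-2$, so it has even order. Two diamonds and a triangle have $11$ vertices, so the triangle's third vertex is left with an extra outgoing edge. Also, a diamond together with an attached vertex has diameter $3$, so it can hold two packing vertices.

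Worse, your verification scheme cannot work for any gadget.
\begin{itemize}
\item If $V(G)$ is split into $4k$ parts, each contained in some $N[v]$, then $n\le 16k$.
\item A packing of size $4k$ has pairwise disjoint closed neighbourhoods of size $4$, so $n\ge 16k$.
\item Hence $n=16k$ and the packing is a perfect code. A perfect code is a dominating set, so $\gamma(G)\le 4k$, contradicting $\gamma(G)=5k$.
\end{itemize}
For the same reason, your volume bound $|D\cap V(H_i)|\ge(|V(H_i)|-2)/4$ is at most $3.5$ per gadget. So any $5/4$ example must have $n>4\rho(G)$, and the lower bound on $\gamma$ has to come from structure rather than counting.

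\textbf{What the paper does, and how to repair your route.} The paper gets the needed structure from triangle replacement. If $G$ arises from a simple cubic $H$ by replacing each vertex with a triangle, then $\gamma(G)=|V(H)|-\rho(H)$. The lower bound on $\gamma(G)$ thus becomes an upper bound on $\rho(H)$, for an explicit $H$ made of $3t$ copies of an $8$-vertex graph $A$.

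Do not copy that example uncritically: its claim $\rho(H)=4t$ already fails for $t=2$. There, $\{a_1,a_4\}$ in $A_1,A_3,A_5$ together with $\{b_2\}$ in $A_2,A_4,A_6$ is a packing of size $9$. Hence $\gamma(G)\le 39<\frac54\cdot 32$, while the paper's own packing gives $\rho(G)\ge 32$.

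Your necklace idea does work with a better gadget. Let $P$ be the prism $K_3\square K_2$ minus the matching edge $u_3v_3$: triangles $u_1u_2u_3$ and $v_1v_2v_3$, plus edges $u_1v_1$ and $u_2v_2$, with ports $u_3,v_3$. Let $D$ be a diamond with tips $w,z$ as ports and middle vertices $x,y$. Arrange a ring whose units repeat as $P,D,P,D,P,D,D$.
\begin{itemize}
\item \emph{Domination.} No single closed neighbourhood contains the four non-port vertices of $P$. So every copy of $P$ needs two dominators of its own, and every $D$ needs one. Conversely, $\{u_1,v_2\}$ dominates $P$ and $\{x\}$ dominates $D$. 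This gives $\gamma=10k$ exactly.
\item \emph{Packing, lower bound.} Choosing $\{u_3,v_3\},z,v_1,x,u_1,w,w$ in the seven units gives a packing of size $8k$.
\item \emph{Packing, upper bound.} For a nonempty unit, let $\ell$ and $r$ be the distances from its chosen vertices to its left and right ports. Adjacent nonempty units must satisfy $r_i+\ell_{i+1}\ge 2$. A single chosen vertex has $\ell+r=3$ in $P$ and $\ell+r=2$ in $D$. Two chosen vertices in a $P$ must be $u_3,v_3$, with $\ell=r=0$. Summing these inequalities around the ring gives $\rho\le 8k$; empty units free fewer constraints than they cost.
\end{itemize}
Hence $\rho=8k$ and $\gamma=\frac54\rho$ for every $k$.
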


This motivates the following conjecture.

\begin{conjecture} \label{conj:cubic-75}
    For every bridgeless claw-free cubic graph $G$, 
$
\gamma(G) \leq \frac{5}{4}\rho(G).
$
\end{conjecture}

Observe that Conjecture \ref{conj:cubic-75}, if true, would be best possible by the family above.

%% file: UDG.tex
A unit disk graph is the intersection graph of equal-radius disks in the plane. Equivalently, it is a graph $G=(V,E)$ for which there exists a representation $p:V\to\mathbb{R}^2$ such that $uv\in E$ if and only if $\|p(u)-p(v)\|_2\le 1$.

In this section, we improve the ratio for unit disk graphs, mentioned in \cite{DBLP:journals/corr/abs-2503-05562}. 

\begin{theorem}[\cite{DBLP:journals/corr/abs-2503-05562}\label{th:UDG_1}]
    Given a unit disk graph $G$, $\gamma(G)\leq 32 \cdot\rho(G)$.
\end{theorem}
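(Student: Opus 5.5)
We first fix a maximum packing $P$ of $G$ and its geometric representation $p$, and then construct a dominating set of size at most $32|P|$. Suppose some vertex $v$ satisfied $N[v]\cap N[u]=\emptyset$ for every $u\in P$. Then $P\cup\{v\}$ would be a larger packing, contradicting maximality. Hence every vertex lies at graph distance at most $2$ from $P$, that is,
\[
V=N_{\le 2}(P)=\bigcup_{u\in P}N_{\le 2}(u).
\]
So it suffices to show that, for each $u\in P$, the set $N_{\le 2}(u)$ can be dominated by at most $32$ vertices. Taking the union of these sets over $u\in P$ then gives $\gamma(G)\le 32\rho(G)$.

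Fix $u\in P$ and translate the plane so that $p(u)$ is the origin. Every edge joins points at Euclidean distance at most $1$, so by the triangle inequality every $w\in N_{\le 2}(u)$ satisfies $\|p(w)\|_2\le 2$. Let $s=1/\sqrt2$ and partition the plane into square cells of side $s$ whose sides are parallel to the axes, with the origin a grid corner. Each cell has diameter $s\sqrt2=1$. Therefore any two vertices whose points lie in the same cell are adjacent, and any single vertex of a cell dominates every vertex of that cell. In each cell that meets $p(N_{\le 2}(u))$ we pick one such vertex. The chosen vertices dominate $N_{\le 2}(u)$, so it remains to count the cells that can meet the closed disk of radius $2=2\sqrt2\,s$.

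In one quadrant, the cells with absolute coordinate indices $i,j\in\{0,1,2\}$, where cell $(i,j)$ covers $|x|\in(is,(i+1)s]$ and $|y|\in(js,(j+1)s]$, form a $3\times 3$ block. This block contains the whole quarter disk except near the corner cell $(2,2)$. The nearest point of that cell to the origin is $(2s,2s)$, at distance exactly $2s\sqrt2=2$. With the half-open convention above, a point with $|x|=2s$ is assigned to index $1$, not $2$. Hence the corner cell $(2,2)$, whose points all satisfy $|x|,|y|>2s$, lies strictly outside the disk, and no point of the disk falls in it. Each quadrant thus needs at most $9-1=8$ cells, giving $4\cdot 8=32$ cells in total.

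Points on the coordinate axes need a consistent assignment to one adjacent quadrant, for example by putting points with $x=0$ into the quadrant with $x\ge 0$. This must be done so that the cells still have diameter at most $1$ and the count stays at $32$; I would spell out this convention explicitly (and, if needed, list the 32 cells as explicit half-open rectangles, e.g.\ closing the cells toward the axes).

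I expect the main obstacle to be exactly this boundary bookkeeping: making the cell partition genuinely a partition with every cell of diameter at most $1$, while making sure the four diagonal corner cells are excluded. A point at distance exactly $2$ on a diagonal is the critical case. The fact that the nearest point of the corner cell is at distance exactly $2$, rather than more, is what makes the constant $32$ work and no smaller in this approach.
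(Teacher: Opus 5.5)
Your proof is correct, but it takes a different route from the paper. The paper cites this bound without proof and instead proves the stronger $\gamma(G)\le 16\rho(G)$ by a global argument. It tiles the plane once by squares of side $1/\sqrt2$ and takes one representative per nonempty square as a dominating set $D$. It then splits $D$ into $16$ classes according to $(i \bmod 4, j \bmod 4)$. Squares in the same class are at Euclidean distance at least $3/\sqrt2>2$, so each class is a packing, and hence $|D|\le 16\rho(G)$.

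You instead charge the dominating vertices to a maximal packing $P$: you use $V=N_{\le 2}(P)$ and cover each closed disk of radius $2$ around $p(u)$, $u\in P$, by $32$ cliques. Your scheme is the generic one: it gives $\gamma\le c\,\rho$ in any class where every $N_{\le 2}(u)$ can be dominated by $c$ vertices. But it pays for the full radius-$2$ disk around every packing vertex, and for the overlaps between these disks. The paper's coloring only needs same-colored squares to be more than $2$ apart, and that is what halves the constant.

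The boundary bookkeeping you single out as the main obstacle is not actually needed. You only need a cover of the disk by sets of diameter at most $1$, not a partition. Adjacency means $\|p(v)-p(w)\|_2\le 1$, and a vertex that is dominated twice costs nothing. In each quadrant, the eight closed squares $[is,(i+1)s]\times[js,(j+1)s]$ (reflected appropriately) with $(i,j)\in\{0,1,2\}^2\setminus\{(2,2)\}$ already suffice. Every point of the disk has $|x|,|y|\le 2<3s$, so a point missed by these squares must have both $|x|>2s$ and $|y|>2s$, and hence norm greater than $2$. The critical diagonal point $(2s,2s)$ simply lies in $[s,2s]^2$.
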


We improve this ratio of $32$ to $16$. Throughout this section, we consider $G=(V,E)$ to be a unit disk graph, where every vertex $v \in V$ is represented by a point $p(v)=(x(v),y(v)) \in \mathbb{R}^2$.
For an integer $i$, define the horizontal strip $S_i$ as follows:

\[S_i = \{(x,y) \in \mathbb{R}^2 : \frac{i}{\sqrt{2}} \leq y < \frac{i+1}{\sqrt{2}}\}\]

We obtain a \emph{strip decomposition} $\mathcal{S}_G$ of $G$ by partitioning the vertex set into $V_i = \{v \in V : p(v) \in S_i\}$ for all $i \in \mathbb{Z}$. Observe that if $u \in V_i$ and $v \in V_j$ are adjacent, then $|i-j| \leq 2$. Indeed, if $|i-j| \geq 3$, then the vertical distance between $p(u)$ and $p(v)$ is at least $\sqrt{2}~>1$, which implies that $u$ and $v$ are not adjacent. 
The following observations will be useful.

\begin{observation}\label{obs:UDG_1}
    Given a unit disk graph $G=(V,E)$, let $u,v\in V(G)$ satisfy $\|p(u)-p(v)\|_2>2$.
Then $N[u]\cap N[v]=\emptyset$.
\end{observation}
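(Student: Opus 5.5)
The plan is to argue by contraposition, using the triangle inequality together with the geometric definition of adjacency in a unit disk graph. Suppose, for contradiction, that some vertex $w$ lies in $N[u]\cap N[v]$. We then bound $\|p(u)-p(v)\|_2$ by at most $2$, which contradicts the hypothesis $\|p(u)-p(v)\|_2>2$.

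The key steps, in order, are as follows. First, note that $w\in N[u]$ means either $w=u$ or $wu\in E$. In both cases $\|p(w)-p(u)\|_2\le 1$: if $w=u$ the distance is $0$, and otherwise the representation $p$ gives $\|p(w)-p(u)\|_2\le 1$ for adjacent vertices. The same reasoning applied to $w\in N[v]$ gives $\|p(w)-p(v)\|_2\le 1$. Second, the triangle inequality in $\mathbb{R}^2$ yields
\[
\|p(u)-p(v)\|_2\le \|p(u)-p(w)\|_2+\|p(w)-p(v)\|_2\le 2,
\]
which is the desired contradiction. Hence $N[u]\cap N[v]=\emptyset$.

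There is no real obstacle here. The only point needing care is the degenerate case $w\in\{u,v\}$. It is covered because the distance from a point to itself is $0\le 1$. Alternatively, one can observe that if $w=u$ then $u\in N[v]$, so $u$ and $v$ would be adjacent or equal, and again $\|p(u)-p(v)\|_2\le 1$. As a remark, the same argument shows that any two vertices whose points are at distance greater than $2$ can be placed together in a packing. This is presumably how the observation will be used later to build a large packing from well-separated points across the strips $S_i$.
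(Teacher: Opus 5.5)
Your proof is correct. The paper states this observation without proof, and the triangle-inequality argument you give — a common vertex $w$ would force $\|p(u)-p(v)\|_2\le\|p(u)-p(w)\|_2+\|p(w)-p(v)\|_2\le 2$, with the case $w\in\{u,v\}$ covered by distance $0$ — is exactly the reasoning it leaves implicit; the paper later uses this observation to show that each $O_k$ is a packing.
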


\begin{observation}\label{obs:UDG_2}
    Given a unit disk graph $G=(V,E)$ and its strip decomposition 
   $ \mathcal S_G=\{V_i : i\in\mathbb Z\}$,
    let $u,v$ be two vertices such that $u\in V_i$ (resp. $v\in V_j$) and $|i-j|\geq 4$. Then $N[u]\cap N[v]=\emptyset$. 
\end{observation}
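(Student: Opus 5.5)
The plan is to reduce the statement to a vertical-distance estimate and then apply Observation~\ref{obs:UDG_1}. I argue by contraposition: suppose $N[u]\cap N[v]\neq\emptyset$, and let $w$ be a common closed neighbour. Since $w$ lies within distance $1$ of both $p(u)$ and $p(v)$ (or coincides with one of them), the triangle inequality gives $\|p(u)-p(v)\|_2\le 2$. This is exactly the contrapositive of Observation~\ref{obs:UDG_1}, so I only need to show that $u\in V_i$, $v\in V_j$ with $|i-j|\ge 4$ forces $\|p(u)-p(v)\|_2>2$.

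For the vertical gap, assume without loss of generality that $j\ge i+4$. The definition of the strips gives $y(u)<\frac{i+1}{\sqrt2}$ and $y(v)\ge \frac{j}{\sqrt2}\ge\frac{i+4}{\sqrt2}$. Hence
\[
y(v)-y(u) > \frac{i+4}{\sqrt2}-\frac{i+1}{\sqrt2}=\frac{3}{\sqrt2}.
\]
Since $\frac{3}{\sqrt2}\approx 2.12>2$, the Euclidean distance satisfies $\|p(u)-p(v)\|_2\ge |y(v)-y(u)|>2$. Observation~\ref{obs:UDG_1} then yields $N[u]\cap N[v]=\emptyset$.

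The argument has no real obstacle. The only point needing care is the half-open strip convention, and that is what makes the inequality strict. Even without strictness, $\frac{3}{\sqrt2}>2$ leaves slack, so the conclusion is robust.
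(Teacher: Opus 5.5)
Your proposal is correct and is essentially the paper's proof: the paper likewise notes that $|i-j|\ge 4$ forces $\|p(u)-p(v)\|_2\ge \frac{3\sqrt2}{2}=\frac{3}{\sqrt2}>2$ and then applies Observation~\ref{obs:UDG_1}. Your use of the half-open strip bounds and your triangle-inequality justification of Observation~\ref{obs:UDG_1} simply spell out details the paper leaves implicit.
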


\begin{proof}
    Note that  $\|p(u)-p(v)\|_2\geq \frac{3\sqrt{2}}{2}>2$, since $|i-j|\geq 4$. By Observation \ref{obs:UDG_1}, $N[u]\cap N[v]=\emptyset$.
\end{proof}

Next, within each strip $S_i$, we further partition it into axis-aligned squares of side length $1/\sqrt{2}$. Formally, for integers $i,j \in \mathbb{Z}$, we define

\[
Q_{i,j} = \Big\{ (x,y) \in S_i : \tfrac{j}{\sqrt{2}} \leq x <  \tfrac{j+1}{\sqrt{2}} \Big\}.
\]
This produces a tiling of $\mathbb{R}^2$ into squares of side length $1/\sqrt{2}$.
Each vertex $v\in V$ is assigned to the unique square $Q_{i,j}$ containing $p(v)$.
We call the resulting partition of $V(G)$ the \emph{square decomposition} of $G$.
For each square $Q_{i,j}$, let
\[
V_{i,j}=\{v\in V : p(v)\in Q_{i,j}\}.
\]
In what follows, we consider only non-empty squares, that is, those $Q_{i,j}$ with $V_{i,j}\neq\emptyset$. For every non-empty square $Q_{i,j}$, fix a representative vertex $v_{i,j}\in V_{i,j}$ (if $|V_{i,j}|>1$ the choice may be arbitrary but fixed).
For each square $Q_{i,j}$, let $G_{i,j}=G[V_{i,j}]$ denote the induced subgraph of $G$ on the vertices contained in $Q_{i,j}$.
Since the diagonal of $Q_{i,j}$ has length $1$, any two points in $Q_{i,j}$ are at Euclidean distance at most $1$. Therefore, $G_{i,j}$ is a clique.

We now assign labels to the squares $Q_{i,j}$ using the function $\ell$
as follows:

\[
\ell(Q_{i,j}) \;=\; 4\big(i \bmod 4\big) \;+\; \big(j \bmod 4\big) \;+\; 1,
\]

Thus each strip $S_i$ uses four consecutive labels periodically to label its squares:

\[
\begin{aligned}
S_{4t} &\mapsto \{1,2,3,4\},\\
S_{4t+1} &\mapsto \{5,6,7,8\},\\
S_{4t+2} &\mapsto \{9,10,11,12\},\\
S_{4t+3} &\mapsto \{13,14,15,16\},
\end{aligned}
\]

and the pattern repeats with period $4$ in the vertical direction and period $4$ in the horizontal direction (see Figure \ref{fig:UDG_2}).

\begin{figure}[h!]
    \centering
    \includegraphics[width=0.7\linewidth]{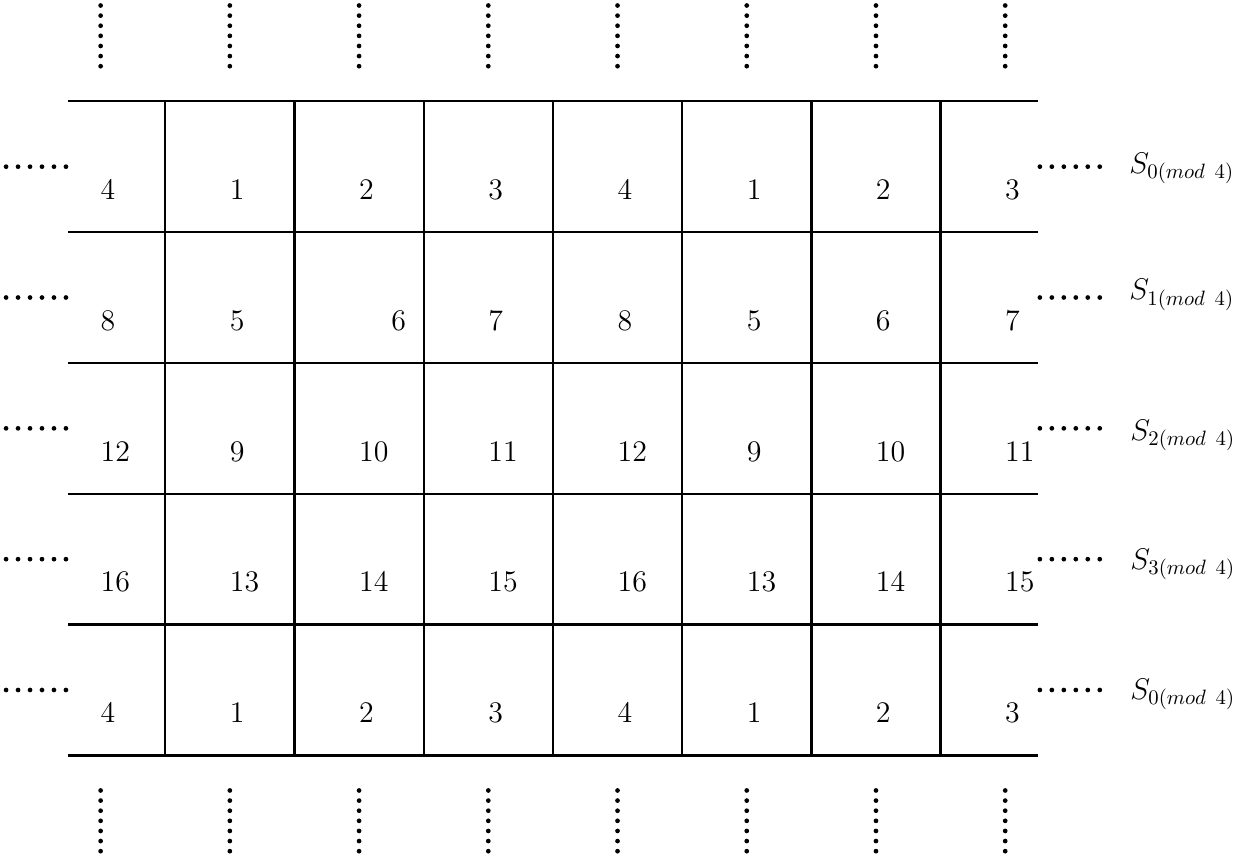}
    \caption{A square decomposition together with the corresponding labels.}
    \label{fig:UDG_2}
\end{figure}

Next we define the sets $O_k$ as follows:
$O_k:=\{v_{i,j} : \ell(Q_{i,j})=k\},\qquad k\in[16].$
We also define a set $D:=\bigcup_{k\in [16]} O_k$. 
The following observation follows from the geometric properties of the decomposition.

\begin{observation}\label{obs:UDG_3}
Given a unit disk graph $G$ and its square decomposition $\{Q_{i,j}\}$, the set $D$ is a dominating set of $G$, and for each $i\in [16]$, the set $O_i$ is a packing set of $G$.
\end{observation}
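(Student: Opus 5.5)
The plan is to prove the two assertions separately and directly, using only that each $G_{i,j}$ is a clique and the distance separation in Observation~\ref{obs:UDG_1}.

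\textbf{Domination.} Let $v\in V$ be arbitrary. Then $p(v)$ lies in a unique square $Q_{i,j}$, so this square is non-empty and has a representative $v_{i,j}\in V_{i,j}$. Every square receives some label $\ell(Q_{i,j})=k\in[16]$, hence $v_{i,j}\in O_k\subseteq D$. Since $G_{i,j}$ is a clique (the diagonal of $Q_{i,j}$ has length $1$), either $v=v_{i,j}$ or $v$ is adjacent to $v_{i,j}$. Thus $N[D]=V$.

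\textbf{Packing.} Fix $k\in[16]$ and take two distinct vertices $v_{i,j},v_{i',j'}\in O_k$. Each square has only one representative, so $(i,j)\neq(i',j')$. Equal labels force $i\equiv i'$ and $j\equiv j' \pmod 4$, because the map $(a,b)\mapsto 4a+b+1$ on $\{0,1,2,3\}^2$ is injective. Hence either $|i-i'|\ge 4$ or $|j-j'|\ge 4$.

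\begin{itemize}
\item If $|i-i'|\ge 4$, the vertices lie in strips $V_i$ and $V_{i'}$ with $|i-i'|\ge4$. Observation~\ref{obs:UDG_2} then gives $N[v_{i,j}]\cap N[v_{i',j'}]=\emptyset$.
\item If $|j-j'|\ge4$, use the horizontal analogue of the argument behind Observation~\ref{obs:UDG_2}. The $x$-coordinates satisfy $x\in[j/\sqrt2,(j+1)/\sqrt2)$ and $x'\in[j'/\sqrt2,(j'+1)/\sqrt2)$. So the horizontal gap is strictly more than $(|j-j'|-1)/\sqrt2\ge 3/\sqrt2>2$. Hence $\|p(v_{i,j})-p(v_{i',j'})\|_2>2$, and Observation~\ref{obs:UDG_1} yields disjoint closed neighborhoods.
\end{itemize}

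In both cases the two vertices are at distance at least $3$, so $O_k$ is a packing.

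The only step needing care is the horizontal case, which is not literally covered by Observation~\ref{obs:UDG_2}. It is handled by the same computation with the roles of the coordinates exchanged.
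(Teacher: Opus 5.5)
Your proof is correct and follows essentially the same route as the paper. Domination comes from the clique property of each square together with its representative. For the packing property you split into the cases $|i-i'|\ge 4$, handled by Observation~\ref{obs:UDG_2}, and $|j-j'|\ge 4$, handled by a horizontal gap exceeding $3/\sqrt{2}>2$ together with Observation~\ref{obs:UDG_1}, exactly as the paper does; your extra details (injectivity of the labeling map, the explicit strict gap computation) make explicit what the paper states briefly.
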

\begin{proof}
Since every non-empty square induces a clique, every vertex of $G$ is adjacent to the representative vertex chosen in its square. Hence $D$ is a dominating set.

Now let $v_{i,j},v_{i',j'}\in O_k$ be distinct. Since $\ell(Q_{i,j})=\ell(Q_{i',j'})$, we have $i\equiv i' \pmod 4$ and $j\equiv j' \pmod 4$. Thus either $|i-i'|\ge 4$ or $|j-j'|\ge 4$.
In the first case, Observation~\ref{obs:UDG_2} implies that $N[v_{i,j}]\cap N[v_{i',j'}]=\emptyset$. In the second case, the horizontal distance between $Q_{i,j}$ and $Q_{i',j'}$ is at least $3/\sqrt{2}>2$, and therefore Observation~\ref{obs:UDG_1} again implies that $N[v_{i,j}]\cap N[v_{i',j'}]=\emptyset$.
Hence $O_k$ is a packing.
\end{proof}

We now state and prove the main theorem of this section.

\UnitDiskTheorem*


\begin{proof}
Let $\{Q_{i,j}\}$ be the square decomposition of $G$ as defined above. By Observation \ref{obs:UDG_3}, the set $D$ is a dominating set of $G$, and each $O_k$ is a packing set of $G$.
Thus, $\gamma(G)\le |D|$ and $|O_k|\le \rho(G)$ for every $k\in[16]$.
Moreover, by construction,
$
|D|=\sum_{k\in[16]} |O_k|.
$
Therefore,
\[
\gamma(G)\le |D|
=\sum_{k\in[16]} |O_k|
\le 16\rho(G),
\]
as desired.
\end{proof}



\UnitDiskFamily*
\begin{proof}
    
We construct a unit disk graph \(H\) such that $\gamma(H)=3$ and 
$\rho(H)=1$. 

\begin{figure}[htbp]

\centering
\begin{tikzpicture}[
    scale=2.6,
    every node/.style={circle, draw, fill=white, inner sep=1.4pt, font=\scriptsize},
    outer/.style={circle, draw, fill=blue!12, inner sep=1.6pt, font=\scriptsize},
    middle/.style={circle, draw, fill=red!12, inner sep=1.4pt, font=\scriptsize},
    edge/.style={line width=0.45pt},
    medge/.style={line width=0.35pt, gray!70}
]

\coordinate (a)  at (0,0);
\coordinate (ap) at (0.60,-0.22);
\coordinate (b)  at (2,0);
\coordinate (bp) at (1.90,0.62);
\coordinate (c)  at (1,1.732);
\coordinate (cp) at (0.51,1.31);

\coordinate (x0) at (1,0);
\coordinate (x1) at (0.95,0.31);
\coordinate (y0) at (0.50,0.866);
\coordinate (y1) at (0.80,0.756);
\coordinate (z0) at (1.50,0.866);
\coordinate (z1) at (1.255,0.655);

\draw[medge] (x0)--(x1);
\draw[medge] (x0)--(y0);
\draw[medge] (x0)--(y1);
\draw[medge] (x0)--(z0);
\draw[medge] (x0)--(z1);

\draw[medge] (x1)--(y0);
\draw[medge] (x1)--(y1);
\draw[medge] (x1)--(z0);
\draw[medge] (x1)--(z1);

\draw[medge] (y0)--(y1);
\draw[medge] (y0)--(z0);
\draw[medge] (y0)--(z1);

\draw[medge] (y1)--(z0);
\draw[medge] (y1)--(z1);

\draw[medge] (z0)--(z1);

\draw[edge] (a)--(ap);
\draw[edge] (b)--(bp);
\draw[edge] (c)--(cp);

\draw[edge] (a)--(x0);
\draw[edge] (a)--(x1);
\draw[edge] (a)--(y0);

\draw[edge] (ap)--(x0);
\draw[edge] (ap)--(x1);
\draw[edge] (ap)--(y1);

\draw[edge] (b)--(x0);
\draw[edge] (b)--(z0);
\draw[edge] (b)--(z1);

\draw[edge] (bp)--(x1);
\draw[edge] (bp)--(z0);
\draw[edge] (bp)--(z1);

\draw[edge] (c)--(y0);
\draw[edge] (c)--(y1);
\draw[edge] (c)--(z0);

\draw[edge] (cp)--(y0);
\draw[edge] (cp)--(y1);
\draw[edge] (cp)--(z1);

\node[outer, label=below left:{$a$}] at (a) {};
\node[outer, label=below:{$a'$}] at (ap) {};
\node[outer, label=below right:{$b$}] at (b) {};
\node[outer, label=right:{$b'$}] at (bp) {};
\node[outer, label=above:{$c$}] at (c) {};
\node[outer, label=left:{$c'$}] at (cp) {};

\node[middle, label=below:{$x_0$}] at (x0) {};
\node[middle, label=below right:{$x_1$}] at (x1) {};
\node[middle, label=left:{$y_0$}] at (y0) {};
\node[middle, label=above right:{$y_1$}] at (y1) {};
\node[middle, label=right:{$z_0$}] at (z0) {};
\node[middle, label=below right:{$z_1$}] at (z1) {};
\end{tikzpicture}
\caption{The unit disk graph $H$}
\label{fig:UDG_ex}

\end{figure}

The graph is $H=(V,E)$ with vertex set
$V=\{a,b,c,a',b',c',x_0,x_1,y_0,y_1,z_0,z_1\}$.
The coordinates are given by:
\begin{itemize}
    \item $a=(0,0),~b=(2,0),~c=(1,\sqrt{3})$,
    \item $x_0=\frac{a+b}{2},~y_0=\frac{a+c}{2},~z_0=\frac{b+c}{2}$,
    \item $a'=(0.6,-0.22),~b'=(1.9,0.62),~c'=(0.51,1.31)$,
    \item $x_1=\frac{a+b'}{2},~y_1=\frac{a'+c}{2},~z_1=\frac{b+c'}{2}$.
\end{itemize}
See Figure~\ref{fig:UDG_ex} for a clear understanding of the graph.
Since $H$ has diameter $2$, every pair of vertices is at distance at most $2$, and therefore $\rho(H)=1$.
As $\{a,b,c\}$ forms an optimal dominating set of $H$, we have $\gamma(H)=3$. 
Hence $\gamma(H)=3\rho(H)$.

We can strengthen the construction to obtain an infinite family of connected unit disk graphs with the same ratio. The key observation is that the class of unit disk graphs is closed under true-twin blow-ups \cite{DBLP:journals/dm/ClarkCJ90}. More precisely, given a unit disk representation of a graph $G$, any vertex $u$ can be replaced by a clique $C_u$ of arbitrary size by placing the vertices of $C_u$ sufficiently close to the point representing $u$. Then every vertex of $C_u$ has the same neighborhood outside $C_u$ as $u$, while the vertices of $C_u$ are pairwise adjacent. Consequently, the resulting graph is again a unit disk graph.


Let $H$ be the unit disk graph constructed above, satisfying
$\gamma(H)=3$ and $\rho(H)=1$.
For each integer $t\ge 1$, let $H_t$ be obtained from $H$ by replacing a fixed vertex $u\in V(H)$ by a clique $C_u$ of size $t$, where every vertex of $C_u$ has the same neighbors outside $C_u$ as $u$ had in $H$. Since $H$ is connected, each $H_t$ is connected.
Since the vertices of $C_u$ are pairwise true twins, any dominating set of $H$ can be viewed as a dominating set of $H_t$, and vice versa. Hence $\gamma(H_t)=\gamma(H)=3$.
Similarly, every packing of $H_t$ contains at most one vertex of $C_u$. Replacing a vertex of $C_u$ by $u$ yields a packing of $H$, and conversely every packing of $H$ is also a packing of $H_t$. Therefore $\rho(H_t)=\rho(H)=1$.

Consequently,
\[
\frac{\gamma(H_t)}{\rho(H_t)}=3.
\]
Since $t$ is arbitrary, this yields an infinite family of connected unit disk graphs attaining the ratio $3$.
\end{proof}
It is natural to ask whether this example is extremal. We attempted to construct unit disk graph $G$ with
$
\frac{\gamma(G)}{\rho(G)}>3,
$
by considering several different geometric configurations. None of these attempts produced a unit disk graph with ratio strictly larger than \(3\). In particular, the main obstruction is that forcing the packing number to remain small tends to create enough short-distance connections to make the graph efficiently dominated. This motivates the following conjecture:


\UnitDiskConjecture

If true, the constant \(3\) would be tight, by the family constructed above.

%% file: claw-free-cubic.tex
Henning \emph{et al.} showed that $\gamma(G) \le 2\rho(G)$ for every claw-free subcubic graph $G$ (with a finite number of exceptions). In this section, we improve this bound for graphs that are claw-free, cubic, and bridgeless.

We consider the following greedy algorithm, first introduced by Favaron to show that $\rho(G) \ge (n-2)/8$ for any connected cubic graph~\cite{Favaron1999}.

\medskip
\noindent\textbf{Algorithm 1.} \emph{Initialize $P := \{x\}$ for an arbitrary vertex $x \in V(G)$; while there exists $u \in N_3(P)$, set $P := P \cup \{u\}$.}
\medskip

We next show that this bound remains valid for subcubic multigraphs.

\begin{lemma} \label{lemma:subcubic-leq-n8}
For any subcubic multigraph $G$ of order $n$, we have $\rho(G) \ge (n-2)/8$.
\end{lemma}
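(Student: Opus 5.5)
The plan is to run Favaron's argument on Algorithm~1 and check that every step only uses upper bounds on neighbourhood sizes. Parallel edges and loops only shrink neighbourhoods, so the argument should carry over verbatim to multigraphs.

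The one caveat is that $G$ must be assumed connected, as in Favaron's setting; I read the statement this way. It fails without connectivity: two disjoint Petersen graphs have $n=20$ and $\rho=2<18/8$. The lemma is presumably only applied to connected graphs.

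\textbf{Step 1 (the output is a maximal packing).} Algorithm~1 only adds vertices $u\in N_3(P)$, that is, vertices at distance exactly $3$ from the current $P$. So $P$ stays a packing, and $\rho(G)\ge |P|$. When the algorithm stops, $N_3(P)=\emptyset$. In a connected graph this forces $N_{\le 2}(P)=V(G)$: a vertex at distance $\ge 3$ from $P$ would have a shortest path to $P$ passing through a vertex at distance exactly $3$.

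\textbf{Step 2 (the first vertex).} Because every vertex has degree at most $3$ (with multiplicity), the first vertex $x$ satisfies
\[
|N_{\le 2}(x)|\le 1+3+6=10 .
\]

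\textbf{Step 3 (each later vertex adds at most $8$ new vertices).} This is the key step. Take a later vertex $u$, added when it lies at distance $3$ from the current $P$. Fix a shortest path $u\,w_1\,w_2\,p$ with $p\in P$; then $w_2\in N(P)$ and $w_1\in N_2(P)$, so both are already counted. We now count the vertices of $N_{\le 2}(u)$ not yet covered.
\begin{itemize}
\item The vertex $u$ itself contributes $1$.
\item The neighbours of $u$ other than $w_1$ contribute at most $2$.
\item Through $w_1$: its neighbours other than $u$ number at most $2$, one of which is $w_2$, so at most $1$ new vertex.
\item Through each of the at most $2$ other neighbours of $u$: at most $2$ further vertices each, so at most $4$.
\end{itemize}
This gives at most $1+2+1+4=8$ new vertices. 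Loops and multiple edges only decrease these counts, so the bound holds for multigraphs.

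\textbf{Step 4 (conclusion).} Summing the contributions,
\[
n=|N_{\le 2}(P)|\le 10+8(|P|-1)=8|P|+2,
\]
hence $\rho(G)\ge|P|\ge (n-2)/8$.

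The main point needing care is Step 3: one must justify that $w_1$ and $w_2$ are already covered, which holds because the distance from $u$ to $P$ is exactly $3$. The multigraph case then needs only the observation above, that all counts are upper bounds which a multigraph can only lower.
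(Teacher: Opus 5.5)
Your proof is correct and is essentially the paper's argument: run Algorithm~1, cover at most $10$ vertices with the first vertex, and cover at most $8$ new ones with each later vertex, because the two interior vertices of its length-$3$ path to $P$ are already in $N_{\le 2}(P)$. Your connectivity caveat is also right. The paper's proof silently uses connectivity when it asserts $Q=V(G)$ at termination, and your example of two disjoint Petersen graphs shows the statement fails as written without it.
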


\begin{proof}
Apply Algorithm~1 and let $P$ be the resulting set, and $Q = N_{\le 2}(P)$. We show that $|P| \ge |Q|/8 - 1/4$ throughout.

\begin{claim}\label{claim:cubic-Nleq2leq10}
For every vertex $u \in V(G)$, $|N_{\le 2}(u)| \le 10$.
\end{claim}

\begin{proof}
Since $G$ is subcubic, $u$ has at most three neighbors, each contributing at most two new vertices at distance two, so $|N_{\le 2}(u)| \le 1+3+6=10$.
\end{proof}

Initially, $P=\{x\}$ and thus $|Q|=|N_{\le 2}(x)| \le 10$ by Claim~\ref{claim:cubic-Nleq2leq10}, so $|P|=1 \ge |Q|/8 - 1/4$. Now suppose a vertex $u$ is added to $P$, and let $q$ be the number of new vertices added to $Q$. Since $u \in N_3(P)$, there exists a path $pu''u'u$ with $p \in P$, $u'' \in N(P)$ and $u' \in N_2(P)$, so $u',u'' \in Q$ before adding $u$. Hence $q \le 8$, and the inequality is preserved.
At termination $Q=V(G)$, so $|P| \ge n/8 - 1/4 \ge (n-2)/8$.
\end{proof}

Lemma \ref{lemma:subcubic-leq-n8} can be extended to claw-free cubic graphs, by showing that $\rho(G) \geq \frac{n-2}{6}$. However, to improve the factor of $2$ given by Henning, we require a refined bound, as follows.
For a graph $G$, let $S(G)$ denote the set of vertices of $G$ that belongs to a subgraph of $G$ isomorphic to the house graph (Figure \ref{fig:claw-free-cubic}a).

\begin{lemma} \label{lemma:claw-free-rho-geqfrac{n-2}{6} +fracS(G)30}
    For any connected claw-free cubic graph $G$,
    on $n$ vertices, $\rho(G) \geq \frac{n-2}{6} + \frac{|S(G)|}{30}$.
\end{lemma}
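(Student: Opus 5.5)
The plan is to rerun the greedy Algorithm~1 from the proof of Lemma~\ref{lemma:subcubic-leq-n8}, but to track a weighted potential instead of $|Q|$. As there, let $P$ be the current packing and $Q=N_{\le 2}(P)$. The invariant I aim to maintain is
\[
|P| \;\ge\; \frac{|Q|}{6} + \frac{|Q\cap S(G)|}{30} - \frac13 .
\]
At termination $Q=V(G)$, so this gives $\rho(G)\ge |P|\ge \frac{n-2}{6}+\frac{|S(G)|}{30}$, which is the statement.

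\textbf{Local count.} Since $G$ is claw-free and cubic, for every $u$ some two neighbours $a,b$ of $u$ are adjacent. Then $a$ and $b$ each have at most one neighbour outside $N[u]$, and the third neighbour $c$ has at most two. Hence $|N_{\le 2}(u)|\le 1+3+4=8$.

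\textbf{Increment step.} When $u\in N_3(P)$ is added, the path $pu''u'u$ shows that $u',u''$ already lie in $Q$. So the number $q$ of newly covered vertices satisfies $q\le 6$. Let $s\le q$ be the number of new vertices lying in $S(G)$. The invariant is preserved provided $5q+s\le 30$. This holds automatically when $q\le 5$. The only case to rule out is therefore $q=6$ with $s\ge 1$.

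\textbf{Main obstacle: the case $q=6$.} Here the count must be tight: $N_{\le 2}(u)$ has exactly $8$ vertices, and $u''$ is the only vertex of $N_2(u)$ already in $Q$. I expect this to force a rigid local structure, which I would derive first: $u$ lies in a triangle $uab$, the vertices of $N_2(u)$ are pairwise distinct, and there are no extra adjacencies among $N_{\le 2}(u)$.

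Then I would show that such rigidity is incompatible with any new vertex lying in a house. Suppose a new vertex $w$ lies in a house $H$. In a cubic claw-free graph the two roof-base vertices of a house have degree $3$ inside $H$, so the house is essentially a closed configuration: a triangle glued to a $4$-cycle. I would check case by case, according to the position of $w$ in $H$ and the distance of $w$ from $u$ ($1$ or $2$), that $H$ must contain a $4$-cycle or triangle meeting $N_{\le 2}(u)$. That would create either a repeated vertex in $N_2(u)$ (so $|N_{\le 2}(u)|<8$) or a second path to $P$ (so some other vertex of $N_{\le 2}(u)$ was already in $Q$). Either way $q\le 5$, a contradiction. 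In this case analysis I would lean on the observation that a vertex of $N_2(u)$ lying in a house acquires an edge back into $N_{\le 2}(u)$ or onto a common neighbour. If the analysis fails in some configuration, my fallback is to charge the deficiency to a later step, for example by showing that the other house vertices must then already lie in $Q$.

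\textbf{Initial step.} Starting from $P=\{x\}$, we need $1\ge |Q|/6+|Q\cap S(G)|/30-1/3$, that is, $5|Q|+|Q\cap S(G)|\le 40$.
\begin{itemize}
\item If $S(G)=\emptyset$, this follows from $|Q|\le 8$.
\item If $S(G)\neq\emptyset$, I would choose $x$ to be a vertex of a house. The house's $4$-cycle and triangle then create coincidences in $N_{\le 2}(x)$, so $|N_{\le 2}(x)|\le 6$, say. This gives $5\cdot 6+6\le 40$.
\end{itemize}
Verifying this initial bound for the best choice of $x$ is a short check on the house configuration.
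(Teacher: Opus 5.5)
Your bookkeeping matches the paper's potential exactly: its $|Q|/6+|R|/5$ equals your $|Q|/6+|Q\cap S(G)|/30$. You also correctly see that a step with $q=6$ is admissible only if $s=0$. That is sharper than the paper's own argument, which only rules out that \emph{all} six new vertices lie in $S(G)$ and so only bounds the increment by $7/6$.

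The step you then need is false. Take the rigid picture: $N(u)=\{a,b,c\}$, $ab\in E$, $c$ in a triangle $cc'c''$, and $u''=a'$ the only old vertex of $N_2(u)$. Let the third neighbours $z$ of $c'$ and $y$ of $c''$ be adjacent, lie outside $N_{\le 2}(u)$, and be far from $P$; this is exactly what a $2$-cycle in Oum's multigraph produces. Then $c,c',c''$ lie in the house formed by the triangle $cc'c''$ and the $4$-cycle $c'c''yz$. That house meets $N_{\le 2}(u)$ only in $\{c,c',c''\}$, so it creates neither a repeated vertex in $N_2(u)$ nor a short path to $P$. Hence $q=6$ and $s\ge 3$ can occur together, giving $5q+s\ge 33>30$. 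Your heuristic that a house vertex of $N_2(u)$ ``acquires an edge back into $N_{\le 2}(u)$'' only produces the edge $c'c''$, which is already in the rigid picture.

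The initial step fails as well. A vertex on the $4$-cycle of a house typically has $|N_{\le 2}(x)|=7$, with six of these in $S(G)$, and a roof has $8$. So the bound $|N_{\le 2}(x)|\le 6$ is not available, and $5|Q|+|Q\cap S(G)|\le 40$ fails.

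No charging scheme or choice of start vertex can rescue this, because the inequality itself is false. Take triangles $A=\{a_1,a_2,a_3\}$, $B$, $C$, $D$ (indexed likewise) and add the edges $a_1b_1$, $a_2b_2$, $c_1d_1$, $c_2d_2$, $b_3c_3$, $d_3a_3$. This is the connected, bridgeless, claw-free cubic graph obtained from a $4$-cycle with two opposite edges doubled.
\begin{itemize}
\item Every vertex lies in a house, e.g.\ triangle $a_1a_2a_3$ or $b_1b_2b_3$ together with the $4$-cycle $a_1a_2b_2b_1$. So $|S(G)|=n=12$, and the lemma would give $\rho(G)\ge \frac{10}{6}+\frac{12}{30}=\frac{31}{15}>2$.
\item But $\rho(G)=2$. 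A packing meets each triangle at most once, so a packing of size $3$ would have two vertices in $A\cup B$ or in $C\cup D$; the two cases are symmetric.
\item The only pair in $A\times B$ at distance at least $3$ is $\{a_3,b_3\}$, and $N_{\le 2}(\{a_3,b_3\})\supseteq C\cup D$, so no third vertex fits.
\item In this graph every start vertex already violates your initial inequality, with $5|Q|+|Q\cap S(G)|\ge 42$.
\end{itemize}
So the statement needs a corrected constant or hypothesis before any version of this greedy argument, yours or the paper's, can go through.
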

\begin{proof}
Set $S:=S(G)$ and $\overline{S}= V(G) \setminus S$.
   Apply Algorithm 1.
    We will show that, after each addition, $$|P| \geq \frac{|Q|}{6}+\frac{|R|}{5}-\frac{1}{3},$$ where 
    $Q=N_{\leq 2}(P) \cap \overline{S}$ and
    $R= N_{\leq 2}(P) \cap S$.
 Hence, at the end of the algorithm, as $N_{\leq 2}(P) =V(G)$, we have $|P| \geq \frac{|\overline{S}|+|S|}{6}+\frac{|S|}{30}-\frac{1}{3}=\frac{n-2}{6}+\frac{|S|}{30}.$
    \input{Figures/fig-claw-free-cubic.tex}

 \begin{claim}\label{claim:cubicclawfree-Nleq2leq8}
     For every vertex $u \in V(G)$, $|N_{\leq 2}(u)| \leq 8$. Moreover, if $|N_{\leq 2}(u)| = 8$, then $N_{\leq 2}(u)$ has the graph of Figure \ref{fig:claw-free-cubic}b as subgraph.
 \end{claim}
 \begin{proof}
    Let $N(u)=\{a,b,c\}$. As $G$ is claw-free, we may assume without loss of generality, that $ab \in E(G)$ so each of $a$ and $b$ has at most one neighbor in $N_2(u)$. As $c$ has at most 2 neighbors in $N_2(u)$, we must have $N_2(u) \leq 4$. Hence $|N_{\leq 2}(u)| \leq 8$. Now, equality holds when $a$ and $b$ has exactly one neighbor, different from each other in $N_2(u)$ and $c$ has exactly 2 such neighbors, which should be adjacent, as $G$ is . Thus $N_{\leq 2}(u)$ has the graph of Figure \ref{fig:claw-free-cubic}b as a subgraph.
 \end{proof}

 We begin by showing that this is the case after the first iteration.
     For this, by Claim \ref{claim:cubicclawfree-Nleq2leq8}, $|N_{\leq 2}(x)| \leq 8$.
   As $|P|=1$ and $|Q|+|R|= |N_{\leq 2}(x)|$, the statement holds.

   Suppose now that $u$ was added to $P$ in an arbitrary iteration of the algorithm. As $u \in N_3(P)$, $u$ has a neighbor $u' \in N_2(P)$, which in turn has a neighbor $u'' \in N(P)$ for some $p \in P$.
   Let $x$ be the neighbor of $u'$ distinct from $u$ and $u''$ (so $N(u')=\{u,u'',x\}$).
   Note that, as $|P-u|=|P|-1$, it suffices to show that $\frac{q}{6}+ \frac{r}{5} \leq 1$, where $q$ and $r$ are the number of vertices added to $Q$
   and $R$, respectively.
   If
   $q+r \leq 5$ then the proof follows.
   Hence, assume that $q+r \geq 6$, which implies that 
   $|N_{\leq2}(u)|=8$
   and $x \in N_3(P)$. As $G$ is claw-free, we must have $ux \in E(G)$.

    We will show that $q \geq 1$. Suppose by contradiction that $q=0$ (and $r=6)$, so all new vertices added to $N_{\leq 2}(P)$  belongs to $S$.
    As $u$ was recently added to this set, there 
    is a subgraph isomorphic to the house graph that contains $u$. So, $u$ belongs to a triangle, say $u\alpha \beta$.

    By Claim \ref{claim:cubicclawfree-Nleq2leq8}, $N_{\leq 2}(u)$ has the graph of Figure \ref{fig:claw-free-cubic}b as a subgraph.
    Considering the labels in such figure,
    we must have $a= \alpha$ and $b= \beta$.
    Now, as $ca',cb' \notin E(G)$, we must have $a'b' \in E(G)$.
    Suppose for a moment that $c=u'$. So, without loss of generality, $c'=u''$.
    This implies that $c'' \in N_{\leq 2}(P-u)$, a contradiction to the fact that $r=6$. Hence, we may assume, without loss of generality, that $u'=a$.
    If $u''=b$ then $u \notin N_3(P-u)$, hence $u''=a'$.
    But, as $a'b' \in E(G)$, we have $b' \in N_{\leq 2}(P-u)$, again a contradiction.
\end{proof}

We use the following characterization due to Oum~\cite{Oum2011}.

\begin{lemma}[Proposition~1 of~\cite{Oum2011}]\label{lemma:claw-free-characterization}
A graph $G$ is cubic, bridgeless, and claw-free if and only if exactly one of the following holds:
\begin{itemize}
    \item $G \cong K_4$,
    \item $G$ is a ring of diamonds, or
    \item there exists a cubic multigraph $H$ such that $G$ is obtained from $H$ by replacing each edge with a (possibly trivial) string of diamonds and each vertex with a triangle.
\end{itemize}
\end{lemma}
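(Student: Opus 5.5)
The plan is to prove the forward direction (cubic, bridgeless, claw-free implies one of the three forms) by a local analysis of triangles, and then to check the converse and the mutual exclusivity directly. We may assume $G$ is connected, as a ring of diamonds and the triangle/diamond replacement of a connected $H$ are connected.

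\textbf{Step 1: every vertex lies in a triangle.} Take a vertex $v$. By claw-freeness the three neighbors of $v$ span at least one edge, so $v$ lies in a triangle. Now fix two triangles sharing an edge $uv$, say $uvw$ and $uvw'$.
\begin{itemize}
\item If $ww'\in E(G)$, then $\{u,v,w,w'\}$ induces $K_4$. Each of these vertices already has degree $3$ inside it, so this $K_4$ is a whole component and $G\cong K_4$.
\item Otherwise $\{u,v,w,w'\}$ induces a diamond. The \emph{inner} vertices $u,v$ are saturated, and each \emph{tip} $w,w'$ has exactly one further neighbor.
\end{itemize}
A tip cannot be in a second triangle other than through its outside edge without exceeding degree $3$. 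Hence two distinct diamonds are vertex-disjoint, and every vertex lies either in a diamond or in exactly one triangle.

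\textbf{Step 2: strings and contraction.} Call two diamonds linked if a tip of one is adjacent to a tip of the other; maximal linked sequences are the strings of diamonds.
\begin{itemize}
\item \emph{All vertices in diamonds.} The link graph is $2$-regular, so connectivity gives a single cycle, which is a ring of diamonds. A long path of diamonds cannot close up otherwise, and its end would force a bridge or a degree deficit.
\item \emph{Some vertex in no diamond.} The vertices outside diamonds lie in pairwise disjoint triangles, because two such triangles sharing a vertex would share an edge by the degree count, giving a diamond. Each of these triangle vertices has exactly one edge leaving its triangle. Following that edge we reach either another triangle directly (a trivial string) or a tip of a string of diamonds. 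Such a string ends at a tip whose outside neighbor is again a triangle vertex; it cannot wrap into a ring, since it is attached to a triangle. Contracting every triangle to a vertex and every string (together with its two attaching edges) to an edge yields a cubic multigraph $H$, and $G$ is recovered by the stated replacement.
\end{itemize}
Loops in $H$ cannot occur. A string whose two ends attach to the same triangle would make the outside edge of the third triangle vertex a bridge, and a direct edge inside a triangle is not an outside edge.

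\textbf{Step 3: converse and exclusivity.} In each of the three forms, every vertex has two adjacent neighbors, so $G$ is claw-free, and degrees are clearly $3$. For bridgelessness I would argue that a bridge of $G$ cannot lie inside a triangle or diamond and so must lie on a string, where it corresponds to a bridge of $H$. This forces the reading that $H$ is bridgeless, as in Oum's setting, and I would make that hypothesis explicit when checking the converse. For exclusivity, $K_4$ has no diamond, a ring of diamonds has no vertex in exactly one triangle, and the third form always contains such triangles.

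\textbf{Main obstacle.} The delicate part is the bookkeeping in Step 2: showing that strings always terminate at triangle vertices, or close into a ring, and that bridgelessness is exactly what rules out the degenerate attachments such as loops and pendant strings. I would handle this by first pinning down the exact degree pattern of tips versus triangle vertices, and then tracing a single outside edge at a time.
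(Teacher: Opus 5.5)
\textbf{Overall.} Your argument is correct in substance, but it takes a different route from the paper: the paper gives no proof and simply cites Oum's Proposition~1. What you give is a self-contained version of the standard structural proof:
\begin{itemize}
\item claw-freeness puts every vertex in a triangle;
\item two triangles on a common edge give either a $K_4$ component or an induced diamond with saturated inner vertices;
\item diamonds are pairwise disjoint, so the link multigraph of diamonds has maximum degree $2$ and its components are rings or strings ending at triangle vertices;
\item contracting vertex-triangles and strings yields $H$.
\end{itemize}

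\textbf{What each route buys.} The citation buys brevity. Your proof shows exactly which hypotheses the paper's paraphrase has lost.
\begin{itemize}
\item You are right that the ``if'' direction fails for an arbitrary cubic multigraph $H$. A bridge of $H$ lifts to a bridge of $G$. Conversely, a cycle of $H$ through an edge lifts to a cycle of $G$, crossing each triangle by one triangle edge and each diamond tip--inner--tip. Hence $G$ is bridgeless iff $H$ is. I believe Oum's formulation takes $H$ to be $2$-edge-connected, which also excludes loops.
\item ``We may assume $G$ is connected'' is not a harmless reduction. Two disjoint copies of $K_4$ form a cubic, bridgeless, claw-free graph of none of the three forms. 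Your remark that the three forms are connected shows that connectivity must be \emph{added} as a hypothesis, not assumed without loss of generality. Oum's ``$2$-edge-connected'' already includes it.
\end{itemize}

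\textbf{Repairs.}
\begin{itemize}
\item In Step~3, the claim ``a ring of diamonds has no vertex in exactly one triangle'' is false: every tip lies in exactly one triangle. Use instead that every vertex of a ring lies in a diamond, whereas no vertex of a vertex-triangle $t(u)$ does. This holds because each vertex of $t(u)$, and each tip, has exactly one neighbor outside its triangle or diamond. So two vertices of $t(u)$ never have a common neighbor outside $t(u)$, and hence $t(u)$ cannot be half of a diamond.
\item In Step~1, state it cleanly: a tip lies in exactly one triangle. A second triangle would have to use the tip's outside edge together with an inner vertex, and inner vertices are saturated.
\item In Step~2, add that a triangle through a non-diamond vertex has all three vertices outside diamonds. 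This follows because every triangle meeting a diamond lies inside that diamond.
\end{itemize}
With these repairs, and with $G$ assumed connected and $H$ assumed bridgeless, the proof is complete.
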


In this context, a \emph{diamond} is an induced subgraph isomorphic to $K_4 - e$. 
We say that two diamonds are \emph{adjacent} if there exists an edge joining a degree-$2$ vertex of one diamond to a degree-$2$ vertex of the other. 
Contracting each diamond to a single vertex and preserving these adjacencies yields an auxiliary graph; if this graph is a path, we call the corresponding subgraph a \emph{string of diamonds}, and if it is a cycle, we call it a \emph{ring of diamonds}.
(Figure~\ref{fig:cubic-claw-free-ring}).

\input{Figures/cubic-claw-free-ring}

\CubicClawFreeTheorem*
\begin{proof}
If $G \cong K_4$ then the proof is clear. 
Otherwise, if $G$ is a ring of diamonds, we construct a dominating set that is also a packing by selecting an arbitrary vertex in each diamond.
Hence, by Lemma \ref{lemma:claw-free-characterization}, $G$ is obtained from a cubic multigraph, say $H$.
    Let $G'$ be the graph obtained from $H$ by replacing each vertex of $H$ with a triangle (that is, before replacing edges with strings of diamonds).
We first construct a solution in $G'$ and then extend it to $G$. Set $n= |V(G')|$, so $|V(H)|=n/3$. 

Let $S$ be the set of vertices of $H$ that belong to a $2$-cycle, and let $Q$ be a maximum packing in $H-S$ (Figure \ref{fig:cubic-claw-free-H}a).
For any $u \in V(H)$, we let $t(u)$ be the corresponding triangle in $G$
and define $t(X)$ accordingly for any $X \subseteq V(H)$.
Also, for any $u \in V(G)$, we denote by $h(u)$ the vertex in $H$ such that $u \in t(h(u))$.

\input{Figures/cubic-claw-free-H}

\begin{claim}\label{claim:gammaG'leqfracn3-Q}
    $\gamma(G') = \frac{n}{3}-\rho(H)$.
\end{claim}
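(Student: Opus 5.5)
The plan is to prove the two inequalities $\gamma(G')\le \frac n3-\rho(H)$ and $\gamma(G')\ge \frac n3-\rho(H)$ separately, using only the triangle structure of $G'$. Recall that each $t(u)$ is a triangle, hence a clique, and each vertex of $G'$ has exactly one neighbour outside its own triangle, coming from the edge of $H$ it represents. In particular $t(u)$ is dominated as soon as it contains a vertex of the dominating set. A vertex of $t(u)$ lying in a triangle containing no vertex of the dominating set can only be dominated through its unique external edge.

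\textbf{Lower bound.} Let $D$ be a minimum dominating set of $G'$ and let $T_0=\{u\in V(H): D\cap t(u)=\emptyset\}$. Then
\[
|D|=\sum_{w\notin T_0}|D\cap t(w)| = \Big(\frac n3-|T_0|\Big)+\sum_{w\notin T_0}\big(|D\cap t(w)|-1\big).
\]
First, $T_0$ is independent in $H$: if $uv\in E(H)$ with $u,v\in T_0$, the endpoints of the corresponding edge of $G'$ are undominated. Second, fix $u\in T_0$. Each of the three vertices of $t(u)$ must have its external neighbour in $D$. So for every $w\notin T_0$ with $m_w$ neighbours in $T_0$ (counted along distinct edges), $t(w)$ contains at least $m_w$ distinct vertices of $D$. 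These are the endpoints of the distinct edges of $H$ joining $w$ to $T_0$. Hence $|D\cap t(w)|-1\ge m_w-1$.

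Now delete, for each $w$ with $m_w\ge 2$, all but one of its $T_0$-neighbours. The remaining set is independent and has no two vertices with a common neighbour, so it is a packing of $H$. Its size is at least $|T_0|-\sum_w (m_w-1)^+$. Therefore
\[
\rho(H)\ge |T_0|-\sum_{w\notin T_0}\big(|D\cap t(w)|-1\big)=\frac n3-|D|,
\]
which gives $\gamma(G')\ge \frac n3-\rho(H)$.

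\textbf{Upper bound.} Let $P$ be a maximum packing of $H$. I put no vertex of $D$ in $t(p)$ for $p\in P$, and exactly one vertex in $t(w)$ for every $w\notin P$, so that $|D|=\frac n3-|P|$. Every $t(w)$ with $w\notin P$ is dominated because it is a clique. For $w$ adjacent to some $p\in P$, this $p$ is unique since $P$ is a packing. I choose the vertex of $t(w)$ whose external edge goes to $t(p)$. Then every vertex of $t(p)$ is dominated by the chosen vertex in the triangle of the corresponding neighbour, provided that the three edges at $p$ go to three distinct neighbours.

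\textbf{Main obstacle.} The hard step is exactly this proviso, which concerns parallel edges of $H$. If $p\in P$ is joined to $w$ by a double edge, then a single chosen vertex of $t(w)$ covers only one of the two corresponding vertices of $t(p)$. The multigraph $H$ consisting of two vertices joined by three parallel edges shows the difficulty is real. There $G'$ is the prism, with $\gamma=2$, while $\frac n3-\rho(H)=1$. So the equality as worded can hold only when $H$ has no parallel edges at the relevant vertices.

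What I would try first is to choose the maximum packing among vertices outside $S$, the set of vertices of $H$ lying on a $2$-cycle. This is the packing $Q$ of $H-S$ that the surrounding proof introduces. With that choice the upper-bound construction above goes through verbatim and gives $\gamma(G')\le \frac n3-|Q|$. Equality with $\rho(H)$ then follows exactly when some maximum packing of $H$ avoids $S$, in particular whenever $H$ is simple. The lower bound holds in general.
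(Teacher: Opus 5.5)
Your analysis is correct, and on this point it is more careful than the paper's. The prism is a genuine counterexample inside the paper's own setting. It is bridgeless, claw-free and cubic, and it is neither $K_4$ nor a ring of diamonds, since it has no induced diamond. It arises from the theta multigraph $H$, for which $\frac n3-\rho(H)=1<2=\gamma(G')$.

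Your upper-bound construction is the paper's: it puts the external neighbours $a_1',a_2',a_3'$ of each $t(a)$, $a\in Q$, into $D'$, and one vertex from each other triangle. It fails exactly at the paper's sentence ``as $Q$ is a packing in $H$, no two vertices in the same triangle of $G'$ are selected'', which is false when $a$ sends a multiple edge to a neighbour.

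Your lower bound takes a different route. The paper takes a minimum dominating set minimizing the number of triangles containing two or more of its vertices. It swaps such a $u_1$ for its private neighbour $u_1'$ to reach at most one dominator per triangle, and then checks that the empty triangles form a packing. Your charging argument needs no normalization and is valid verbatim for multigraphs. It is also sharp enough to detect parallel edges. Since the theorem afterwards only uses $\gamma(G')\le \frac n3-|Q|$ with $Q$ a maximum packing of $H-S$, your corrected statement is exactly what the subsequent computation needs.

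There are two points you assert without argument.

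First, your construction needs $Q$ to be a packing of $H$, not just of $H-S$, because the chosen vertex of $t(w)$ must be unique also for $w\in S$. This holds. A vertex $s\in S$ spends at least two of its three edges on its $2$-cycle partner, so it has at most one neighbour outside $S$. It therefore cannot be a common neighbour of two vertices of $H-S$. Hence packings of $H-S$ are exactly the packings of $H$ avoiding $S$.

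Second, you do not argue the ``only if'' half of your equality criterion, but your own charging gives it. Suppose a minimum $D$ has some $u\in T_0$ joined to $w$ by a parallel edge. Then $w$ forces at most $m_w-2$ deletions while being charged $m_w-1$, so your chain gives $\rho(H)\ge \frac n3-|D|+1$. Hence equality forces $T_0\cap S=\emptyset$, and the remaining set after deletion is a maximum packing avoiding $S$.
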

\begin{proof}
    Let $Q$ be a maximum packing in $H$.
    Consider a dominating set $D'$ in $G'$ formed as follows. Let $a \in V(Q)$ and $t(a)=\{a_1,a_2,a_3\}$. For every $a_i$, let $a'_i$ be the (unique) neighbor of $a_i$ not in $t(a)$.
    Then, we add $\{a'_1,a'_2,a'_3\}$ to $D'$ for any such $a$. After that, for every vertex $u\in N_{\leq 2} (Q)$, we pick an arbitrary vertex in $t(u)$ and add it to $D'$ (Figure \ref{fig:cubic-claw-free-H}b).

    It is clear that $D'$ dominates $G'$.
    Indeed, for each $u \notin Q$, the triangle $t(u)$ contains a selected vertex. 
For each $a \in Q$, every vertex of $t(a)$ is adjacent to one of $a'_1,a'_2,a'_3 \in D'$.
    Now, as $Q$ is a packing in $H$, no two vertices in the same triangle of $G'$ are selected. Thus, we selected exactly one vertex from each triangle of $G'$, except from the triangles that corresponds to some vertex in $Q$. Hence $|D'|=\frac{n}{3}-|Q|$.
    
    Conversely, let $D'$ a minimum dominating set in $G'$. Moreover, let us assume that $D'$ minimizes the number of vertices $u \in V(H)$ such that $|t(u) \cap D'| \geq 2$.
    We first show that there exists no such vertices. Indeed, suppose for a contradiction that there exists a vertex
     $u \in V(H)$ such that $|t(u) \cap D'| \geq 2$. Suppose $t(u) = \{u_1,u_2,u_3\}$
     and that $u_1,u_2 \in D'$.
     For every $u_i$, let $u'_i$ be the unique neighbor of $u_i$ not in $t(u)$.
     Note that, as $D'$ is minimal, no other neighbor of $u'_1$ is in $D'$, but then $(D' \setminus \{u_1\}) \cup \{u'_1\}$ is also a minimum dominating set, contradicting the choice of $D'$
     
    Consider the set $Q \subseteq V(H)$ formed by the vertices $u \in V(H)$ with $t(u) \cap D' = \emptyset$.
    We will show that $Q$ is a packing.
    Let $u,v \in Q$ and suppose for a contradiction that $u$ and $v$ are at distance at most 2.
    Let $t(u)= \{u_a,u_b,u_c\}, t(v)= \{v_a,v_b,v_c\}$
    If $uv \in E(H)$, then we
     may assume without of generality that $u_av_a \in E(G')$. But in that case, both $u_a$ and $v_a$ are not dominated by $D'$, a contradiction.
     Hence, assume that there exists a vertex $x \in V(H)$ such that $x \in N(u) \cap N(v)$. Let $t(x)=\{x_a,x_b,x_c\}$
     and assume without loss of generality that
     $u_ax_a,v_bx_b \in V(G')$.
     In this case, by the choose of $D'$ at least one of $\{x_a,x_b\}$ is not in $D'$
     and hence one of $\{u_a,v_b\}$ is not dominated by $D'$, a contradiction.
\end{proof}

  Let $Q$ be a maximum packing in $H$.
A vertex $v \in V(G')$ belongs to $t(s)$ for some $s \in S$ if and only if $v \in S(G')$. Hence,
by Lemma~\ref{lemma:claw-free-rho-geqfrac{n-2}{6} +fracS(G)30}, $\rho(G') \geq \frac{n-2}{6}+\frac{|t(S)|}{30}=\frac{n-2}{6}+\frac{|S|}{10}$.
Note that $H-S$ is a subcubic multigraph, hence, by Lemma~\ref{lemma:subcubic-leq-n8}, 
\[
|Q|=\rho(H-S) \geq \frac{n/3-|S|-2}{8}.
\]
This, together with Claim~\ref{claim:gammaG'leqfracn3-Q}, implies that
\[
\gamma(G') \leq \frac{n}{3} - \left(\frac{n}{24}-\frac{|S|}{8}-\frac{1}{4}\right)
= \frac{7n}{24} + \frac{|S|}{8} + \frac{1}{4}.
\]
Using again that $\rho(G') \geq \frac{n-2}{6}+\frac{|S|}{10}$, we obtain
\[
n \leq 6\rho(G') + 2 - \frac{3|S|}{5}.
\]
Substituting this into the previous inequality yields
\[
\gamma(G') \leq \frac{7}{4}\rho(G') - \frac{1}{20}|S| + \frac{5}{6}
\leq \frac{7}{4}\rho(G') + \frac{5}{6}.
\]

Finally, we argue by induction on the number of edges of $H$ that are replaced by strings of diamonds. 
If no edge is replaced, then $G = G'$ and the result follows from the previous paragraph. 
Otherwise, let $uv$ be an edge of $G'$ that is replaced by a string of diamonds. 
By the inductive hypothesis, there exists a dominating set $D'$ and a packing $P'$ in the graph before replacing $uv$ satisfying $|D'| \le \frac{7}{4}|P'| + \frac{5}{6}$.
Suppose first that $u,v \notin D'$.
As $D'$ is a dominating set, both $u$ and $v$ are still dominated by vertices of $G'$.
Moreover, we can select one vertex for each diamond added to form a dominating set for the new added vertices.

\input{Figures/cubic-claw-free-string}

Now, note that at most one of $\{u,v\}$ is in $P'$.
If none of them are in $P'$, we pick an arbitrary vertex in each diamond of the string of diamonds and add it to $P'$ (Figure  \ref{fig:cubic-claw-free-string}a).
Otherwise, we may assume without loss of generality that $u \notin P'$ and $v \in P'$.
In this case, we can also select a vertex in each diamond of the string of diamonds and add it to $P'$,
however, we must start by selecting a vertex in a diamond that is a neighbor of $u$ in $G$ (Figure \ref{fig:cubic-claw-free-string}b). As $v \in P'$, no vertex in $P' \setminus \{v\}$ is a neighbor of $u$ in $G'$, so this set is a packing in $G$.
\end{proof}

%% file: Figures/fig-claw-free-cubic.tex
\begin{figure}[ht]
\centering

\begin{subfigure}{0.45\textwidth}
\centering
\begin{tikzpicture}[
    vertex/.style={circle, draw, fill=black, inner sep=2pt},
    edge/.style={draw}
]
\node[vertex] (v1) at (0,0) {};
\node[vertex] (v2) at (2,0) {};
\node[vertex] (v3) at (2,2) {};
\node[vertex] (v4) at (0,2) {};
\node[vertex] (v5) at (1,3) {};

\draw (v1)--(v2)--(v3)--(v4)--(v1);
\draw (v3)--(v5)--(v4);
\end{tikzpicture}

\end{subfigure}
\hfill
\begin{subfigure}{0.45\textwidth}
\centering

\begin{tikzpicture}[
    vertex/.style={circle, draw, fill=black, inner sep=2pt},
    edge/.style={draw}
]

\node[vertex,label=above:$u$] (u) at (0,3) {};

\node[vertex,label=left:$a$] (a) at (-2,2) {};
\node[vertex,label=right:$b$] (b) at (0,2) {};
\node[vertex,label=above right:$c$] (c) at (2,2) {};

\node[vertex,label=below:$a'$] (a1) at (-2,0.8) {};
\node[vertex,label=below:$b'$] (b1) at (0,0.8) {};
\node[vertex,label=below left:$c'$] (c1) at (1.5,0.8) {};
\node[vertex,label=below right:$c''$] (c2) at (2.8,0.8) {};

\draw (u) -- (a);
\draw (u) -- (b);
\draw (u) -- (c);

\draw (a) -- (b);
\draw (a) -- (a1);
\draw (b) -- (b1);

\draw (c) -- (c1);
\draw (c) -- (c2);
\draw (c1) -- (c2);

\draw[dashed] (a1) -- (b1);
\draw[dashed, bend right=-30] (a1) to (c1);
\draw[dashed, bend right=-30] (a1) to (c2);
\draw[dashed] (b1) -- (c1);
\draw[dashed, bend right=-30] (b1) to (c2);

\end{tikzpicture}

\end{subfigure}

\caption{$(a)$ The house graph. $(b)$ The subgraph mentioned in the proof of Claim \ref{claim:cubicclawfree-Nleq2leq8}. Dashed edges may or not may exist in $N_{\leq2}(u)$.}
\label{fig:claw-free-cubic}
\end{figure}

%% file: Figures/cubic-claw-free-ring.tex
\begin{figure}[H]
\centering
\begin{tikzpicture}[scale=0.4, transform shape]
   
    \begin{scope}[yshift=1cm]
    \node[vertex] (A0) {};
   \node[vertex] (A1) at ($(A0)+(4,0)$) {};
    \node[vertex, fill=red, draw=black, thick] (A2) at ($(A0)+(2,1)$) {};
    \node[vertex] (A3) at ($(A0)+(2,-1)$) {};
    \foreach \source/\dest in {A0/A2,A0/A3,A1/A2,A1/A3,A2/A3}
        \path[black edge] (\source) -- (\dest);
    \end{scope}

    \draw[black edge]
      (A2) -- (A3);
        
    \begin{scope}[xshift=5cm,yshift=1cm, rotate=45]
    \node[vertex] (B0) {};
    \node[vertex] (B1) at ($(B0)+(4,0)$) {};
    \node[vertex, fill=red, draw=black, thick] (B2) at ($(B0)+(2,1)$) {};
    \node[vertex] (B3) at ($(B0)+(2,-1)$) {};
    \foreach \source/\dest in {B0/B2,B0/B3,B1/B2,B1/B3,B2/B3}
        \path[black edge] (\source) -- (\dest);
    \end{scope}

      \draw[black edge]
      (B2) -- (B3);

    \begin{scope}[xshift=7.8cm,yshift=5cm, rotate=135]
    \node[vertex] (C0) {};
    \node[vertex] (C1) at ($(C0)+(4,0)$) {};
    \node[vertex, fill=red, draw=black, thick] (C2) at ($(C0)+(2,1)$) {};
    \node[vertex] (C3) at ($(C0)+(2,-1)$) {};
    \foreach \source/\dest in {C0/C2,C0/C3,C1/C2,C1/C3,C2/C3}
        \path[black edge] (\source) -- (\dest);
    \end{scope}

     \draw[black edge]
      (C2) -- (C3);

    \begin{scope}[yshift=7.85cm]
    \node[vertex] (D0) {};
    \node[vertex] (D1) at ($(D0)+(4,0)$) {};
    \node[vertex] (D2) at ($(D0)+(2,1)$) {};
    \node[vertex, fill=red, draw=black, thick] (D3) at ($(D0)+(2,-1)$) {};
    \foreach \source/\dest in {D0/D2,D0/D3,D1/D2,D1/D3,D2/D3}
        \path[black edge] (\source) -- (\dest);
    \end{scope}

     \draw[black edge]
      (D2) -- (D3);

    \begin{scope}[xshift=-4cm,yshift=3.85cm, rotate=-45]
    \node[vertex] (F0) {};
    \node[vertex] (F1) at ($(F0)+(4,0)$) {};
    \node[vertex, fill=red, draw=black, thick] (F2) at ($(F0)+(2,1)$) {};
    \node[vertex] (F3) at ($(F0)+(2,-1)$) {};
    \foreach \source/\dest in {F0/F2,F0/F3,F1/F2,F1/F3,F2/F3}
        \path[black edge] (\source) -- (\dest);
    \end{scope}

    \draw[black edge]
      (F2) -- (F3);

    \begin{scope}[xshift=-4cm,yshift=5cm, rotate=45]
    \node[vertex] (E0) {};
    \node[vertex] (E1) at ($(E0)+(4,0)$) {};
    \node[vertex] (E2) at ($(E0)+(2,1)$) {};
    \node[vertex, fill=red, draw=black, thick] (E3) at ($(E0)+(2,-1)$) {};
    \foreach \source/\dest in {E0/E2,E0/E3,E1/E2,E1/E3,E2/E3}
        \path[black edge] (\source) -- (\dest);
    \end{scope}

        \draw[black edge]  (E2) -- (E3);

\draw[black edge]      (A0) -- (F1);
    \draw[black edge] (D0) -- (E1);
    \draw[black edge] (E0) -- (F0);
    \draw[black edge] (C1) -- (D1);
    \draw[black edge] (B1) -- (C0);
    \draw[black edge] (A1) -- (B0);

\end{tikzpicture}
\label{fig:claw-free-a}
\caption{(a) A ring of diamonds. Filled vertices form a dominating set which is also a packing.}
\label{fig:cubic-claw-free-ring}
\end{figure}

%% file: Figures/cubic-claw-free-H.tex
\begin{figure}[H]
\centering

\begin{subfigure}{0.48\textwidth}
\centering
\begin{tikzpicture}[scale=1, 
  every node/.style={circle, draw, fill=black, inner sep=2pt}]

  \node[label=below left:$a$, color=red] (a) at (0,0) {};
  \node[label=below right:$c$] (c) at (2,0) {};
  \node[label=above right:$d$] (d) at (2,2) {};
  \node[label=above left:$b$] (b) at (0,2) {};

  \node[label=above:$e$] (e) at ($(b)!1/3!(d)$) {};
  \node[label=above:$f$] (f) at ($(b)!2/3!(d)$) {};

  \node[label=below:$g$] (g) at ($(a)!1/3!(c)$) {};
  \node[label=below:$h$] (h) at ($(a)!2/3!(c)$) {};

  \draw (a) -- (g);
  \draw (h) -- (c);
  \draw (c) -- (d);
  \draw (d) -- (f);
  \draw (e) -- (b);
  \draw (b) -- (a);

  \draw (a) -- (d);
  \draw (c) -- (b);

  \draw[bend left=30] (e) to (f);
  \draw[bend right=30] (e) to (f);

  \draw[bend left=30] (g) to (h);
  \draw[bend right=30] (g) to (h);

\end{tikzpicture}
\caption{}
\label{fig:claw-free-a}
\end{subfigure}
\hfill
\begin{subfigure}{0.48\textwidth}
\centering
\begin{tikzpicture}[scale=0.5, transform shape]

\tikzset{
  vertex/.style={circle, draw, fill=black, inner sep=2pt}
}

    \begin{scope}[xshift=-6cm, yshift=1cm, rotate=60]
    \node[vertex,label=above:$a_0$] (A0) {};
    \node[vertex,label=right:$a_1$] (A1) at ($(A0)+(2,0)$) {};
    \node[vertex,label=above:$a_2$] (A2) at ($(A0)+(1,{1*sqrt(3)})$) {};
    \foreach \source/\dest in {A0/A1,A1/A2,A2/A0}
        \path[black edge] (\source) -- (\dest);
    \end{scope}

    \begin{scope}[xshift=-1cm, yshift=1cm, rotate=60]
    \node[vertex,label=below:$g_0$, color=red] (G0) {};
    \node[vertex,label=left:$g_1$] (G1) at ($(G0)+(2,0)$) {};
    \node[vertex,label=above:$g_2$] (G2) at ($(G0)+(1,{1*sqrt(3)})$) {};
    \foreach \source/\dest in {G0/G1,G1/G2,G2/G0}
        \path[black edge] (\source) -- (\dest);
    \end{scope}

    \begin{scope}[xshift=2cm, yshift=1cm, rotate=60]
    \node[vertex,label=above:$h_0$] (H0) {};
    \node[vertex,label=below:$h_1$] (H1) at ($(H0)+(2,0)$) {};
    \node[vertex,label=right:$h_2$] (H2) at ($(H0)+(1,{1*sqrt(3)})$) {};
    \foreach \source/\dest in {H0/H1,H1/H2,H2/H0}
        \path[black edge] (\source) -- (\dest);
    \end{scope}

    \begin{scope}[xshift=6cm, yshift=1cm, rotate=60]
    \node[vertex,label=below:$c_0$] (C0) {};
    \node[vertex,label=below:$c_1$] (C1) at ($(C0)+(2,0)$) {};
    \node[vertex,label=above:$c_2$] (C2) at ($(C0)+(1,{1*sqrt(3)})$) {};
    \foreach \source/\dest in {C0/C1,C1/C2,C2/C0}
        \path[black edge] (\source) -- (\dest);
    \end{scope}

    \begin{scope}[xshift=-7cm, yshift=7cm]
    \node[vertex,label=left:$b_0$, color=red] (B0) {};
    \node[vertex,label=below:$b_1$] (B1) at ($(B0)+(2,0)$) {};
    \node[vertex,label=above:$b_2$] (B2) at ($(B0)+(1,{1*sqrt(3)})$) {};
    \foreach \source/\dest in {B0/B1,B1/B2,B2/B0}
        \path[black edge] (\source) -- (\dest);
    \end{scope}

    \begin{scope}[xshift=-2cm, yshift=7cm]
    \node[vertex,label=below:$e_0$] (E0) {};
    \node[vertex,label=above right:$e_1$] (E1) at ($(E0)+(2,0)$) {};
    \node[vertex,label=above:$e_2$] (E2) at ($(E0)+(1,{1*sqrt(3)})$) {};
    \foreach \source/\dest in {E0/E1,E1/E2,E2/E0}
        \path[black edge] (\source) -- (\dest);
    \end{scope}

    \begin{scope}[xshift=1cm, yshift=7cm]
    \node[vertex,label=above right:$f_0$] (F0) {};
    \node[vertex,label=below:$f_1$] (F1) at ($(F0)+(2,0)$) {};
    \node[vertex,label=above:$f_2$] (F2) at ($(F0)+(1,{1*sqrt(3)})$) {};
    \foreach \source/\dest in {F0/F1,F1/F2,F2/F0}
        \path[black edge] (\source) -- (\dest);
    \end{scope}

    \begin{scope}[xshift=5cm, yshift=7cm]
    \node[vertex,label=below:$d_0$, color=red] (D0) {};
    \node[vertex,label=below:$d_1$] (D1) at ($(D0)+(2,0)$) {};
    \node[vertex,label=above:$d_2$] (D2) at ($(D0)+(1,{1*sqrt(3)})$) {};
    \foreach \source/\dest in {D0/D1,D1/D2,D2/D0}
        \path[black edge] (\source) -- (\dest);
    \end{scope}

    \begin{scope}[xshift=-7cm, yshift={6cm}]
    \node[vertex,label=left:$w_0$] (W0) {};
    \node[vertex,label=right:$w_1$] (W1) at ($(W0)+(0,-2)$) {};
    \node[vertex,label=left:$w_2$] (W2) at ($(W0)+(-0.5,-1)$) {};
    \node[vertex,label=right:$w_3$] (W3) at ($(W0)+(0.5,-1)$) {};
    \foreach \source/\dest in {W0/W2,W0/W3,W1/W2,W1/W3,W2/W3}
        \path[black edge] (\source) -- (\dest);
    \end{scope}
    
    \begin{scope}[xshift=7cm, yshift=6cm]
    \node[vertex,label=left:$y_0$] (Y0) {};
    \node[vertex,label=right:$y_1$] (Y1) at ($(Y0)+(0,-2)$) {};
    \node[vertex,label=left:$y_2$] (Y2) at ($(Y0)+(-0.5,-1)$) {};
    \node[vertex,label=right:$y_3$] (Y3) at ($(Y0)+(0.5,-1)$) {};
    \foreach \source/\dest in {Y0/Y2,Y0/Y3,Y1/Y2,Y1/Y3,Y2/Y3}
        \path[black edge] (\source) -- (\dest);
    \end{scope}

    \draw[black edge] (A1) -- (D0);
    \draw[black edge] (B1) -- (C2);
    \draw[black edge] (A2) -- (W1);
    \draw[black edge] (B2) -- (E2);
    \draw[black edge] (D2) -- (F2);
    \draw[black edge] (Y0) -- (D1);
    \draw[black edge] (A0) -- (G0);
    \draw[black edge] (C0) -- (H0);
    \draw[black edge] (B0) -- (W0);
    \draw[black edge] (C1) -- (Y1);

    \draw[black edge, bend left=20] (G2) to (H2);
    \draw[black edge, bend left=20] (G1) to (H1);
    \draw[black edge, bend left=-20] (E0) to (F0);
    \draw[black edge, bend left=-20] (E1) to (F1);

\end{tikzpicture}
\caption{}
\label{fig:claw-free-b}
\end{subfigure}

\caption{(a) A cubic multigraph $H$. Here $S=\{e,f,g,h\}$ and $Q=\{a\}$ is a maximum packing of $H-S$. (b) A claw-free cubic graph obtained from $H$. In this graph, $D'$ selects vertices $b_0,d_0$ and $g_0$, after that, $D'$ selects four extra vertices, one vertex from each of the triangles $t(e),t(f),t(h)$ and $t(c)$.}
\label{fig:cubic-claw-free-H}
\end{figure}

%% file: Figures/cubic-claw-free-string.tex
\begin{figure}[ht]
\centering

\begin{subfigure}{0.48\textwidth}
\centering
\begin{tikzpicture}[scale=0.75, transform shape,
    vertex/.style={circle, draw, fill=white, inner sep=2pt},
    redvertex/.style={circle, draw, fill=red, inner sep=2pt},
    edge/.style={draw, thick}
]

\node[vertex, label=below:$u$] (u) at (0,0) {};
\node[vertex] (a1) at (1,0) {};
\draw[edge] (u) -- (a1);

\node[redvertex] (t1) at (2,1) {};
\node[vertex] (b1) at (2,-0.9) {};
\node[vertex] (c1) at (3,0) {};
\draw[edge] (a1) -- (t1);
\draw[edge] (a1) -- (b1);
\draw[edge] (b1) -- (c1);
\draw[edge] (c1) -- (t1);
\draw[edge] (t1) -- (b1);

\node[vertex] (a2) at (4,0) {};
\draw[edge] (c1) -- (a2);

\node[redvertex] (t2) at (5,1) {};
\node[vertex] (b2) at (5,-0.9) {};
\node[vertex] (c2) at (6,0) {};
\draw[edge] (a2) -- (t2);
\draw[edge] (a2) -- (b2);
\draw[edge] (b2) -- (c2);
\draw[edge] (c2) -- (t2);
\draw[edge] (t2) -- (b2);

\node[vertex] (a3) at (7,0) {};
\draw[edge] (c2) -- (a3);

\node[redvertex] (t3) at (8,1) {};
\node[vertex] (b3) at (8,-0.9) {};
\node[vertex] (c3) at (9,0) {};
\draw[edge] (a3) -- (t3);
\draw[edge] (a3) -- (b3);
\draw[edge] (b3) -- (c3);
\draw[edge] (c3) -- (t3);
\draw[edge] (t3) -- (b3);

\node[vertex, label=below:$v$] (v) at (10,0) {};
\draw[edge] (c3) -- (v);

\end{tikzpicture}
\caption{}
\end{subfigure}
\hfill
\begin{subfigure}{0.48\textwidth}
\centering
\begin{tikzpicture}[scale=0.75, transform shape,
    vertex/.style={circle, draw, fill=white, inner sep=2pt},
    redvertex/.style={circle, draw, fill=red, inner sep=2pt},
    edge/.style={draw, thick}
]

\node[vertex, label=below:$u$] (u) at (0,0) {};
\node[redvertex] (a1) at (1,0) {};
\draw[edge] (u) -- (a1);

\node[vertex] (t1) at (2,1) {};
\node[vertex] (b1) at (2,-0.9) {};
\node[vertex] (c1) at (3,0) {};
\draw[edge] (a1) -- (t1);
\draw[edge] (a1) -- (b1);
\draw[edge] (b1) -- (c1);
\draw[edge] (c1) -- (t1);
\draw[edge] (t1) -- (b1);

\node[redvertex] (a2) at (4,0) {};
\draw[edge] (c1) -- (a2);

\node[vertex] (t2) at (5,1) {};
\node[vertex] (b2) at (5,-0.9) {};
\node[vertex] (c2) at (6,0) {};
\draw[edge] (a2) -- (t2);
\draw[edge] (a2) -- (b2);
\draw[edge] (b2) -- (c2);
\draw[edge] (c2) -- (t2);
\draw[edge] (t2) -- (b2);

\node[redvertex] (a3) at (7,0) {};
\draw[edge] (c2) -- (a3);

\node[vertex] (t3) at (8,1) {};
\node[vertex] (b3) at (8,-0.9) {};
\node[vertex] (c3) at (9,0) {};
\draw[edge] (a3) -- (t3);
\draw[edge] (a3) -- (b3);
\draw[edge] (b3) -- (c3);
\draw[edge] (c3) -- (t3);
\draw[edge] (t3) -- (b3);

\node[redvertex, label=below:$v$] (v) at (10,0) {};
\draw[edge] (c3) -- (v);

\end{tikzpicture}
\caption{}
\end{subfigure}

\caption{The construction of from $P'$ in the last part of the proof of Theorem \ref{theorem:cubic-claw-free}. $(a)$ $u, v \notin P'$, $(b)$ $u \notin P'$ and $v \in P'$.}
\label{fig:cubic-claw-free-string}
\end{figure}

%% file: claw-free-cubic-family.tex
\begin{theorem}
    There is an inifnite family of bridgeless claw-free cubic graphs $\mathcal{G}$ such that, for each $G \in \mathcal{G}$, $\gamma(G) = \frac{5}{4}\rho(G)$.
\end{theorem}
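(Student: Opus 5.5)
The plan is to find one small bridgeless claw-free cubic graph $G_0$ with $\gamma(G_0)=5$ and $\rho(G_0)=4$, and then generate the infinite family from it by an operation under which both parameters are additive.

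For the family itself, take $G_k$ to be the disjoint union of $k$ copies of $G_0$. Each component is cubic, claw-free and bridgeless, so $G_k$ is too. A minimum dominating set and a maximum packing of a disjoint union split into independent choices in each component, so $\gamma(G_k)=5k$ and $\rho(G_k)=4k$. Hence $\gamma(G_k)=\frac54\rho(G_k)$ for every $k$. If connected examples are wanted, I would instead link the copies in a cyclic chain by deleting one suitable edge from each copy and reconnecting the freed endpoints. One would then have to recheck that the graph stays claw-free and bridgeless and that neither parameter changes. I regard this as optional.

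For $G_0$ I would work inside the third case of Lemma~\ref{lemma:claw-free-characterization}. There Claim~\ref{claim:gammaG'leqfracn3-Q} gives the domination number exactly: $\gamma(G')=\frac{n}{3}-\rho(H)$ when $G'$ is obtained from a cubic multigraph $H$ by replacing each vertex with a triangle. So the lower bound $\gamma\ge5$ comes for free once $H$ is fixed, and the whole problem moves to computing $\rho(G_0)$. The upper bound on $\rho(G_0)$ will come from counting: each closed neighbourhood has $4$ vertices, so $\rho\le |V|/4$. This can be sharpened using that a packing contains at most one vertex per triangle or diamond. The matching lower bound will come from an explicit packing exhibited in a figure.

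The main obstacle is choosing $G_0$ so that the packing number is large enough. A first try is $H=K_{3,3}$, giving $n=18$ and $\gamma=6-1=5$. However, quick attempts only produce packings of size $3$ there, which would give ratio $5/3$. The reason is that every vertex of a triangle $t(b_j)$ sits within distance two of a packing vertex placed at $(a_j,b_j)$. The remedy I would try next is to insert strings of diamonds on some edges of $H$. A diamond contributes $4$ vertices, adds one to $\gamma$, and can host a packing vertex, so it shifts the ratio toward $1$. Pairing this with triangles that are hard to pack, and choosing the number of diamonds correctly, should give $\gamma:\rho=5:4$. Concretely, I would use edges of multiplicity two in $H$: these create houses, and by Lemma~\ref{lemma:claw-free-rho-geqfrac{n-2}{6} +fracS(G)30} houses change $\rho$ in a controlled way. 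I would then tune by a small search until $\gamma=5$ and $\rho=4$. For the chosen graph, $\gamma\le5$ is verified by an explicit set, $\gamma\ge5$ by Claim~\ref{claim:gammaG'leqfracn3-Q} (or by a direct case argument if diamonds are present), and $\rho=4$ by the counting bound together with an explicit packing.
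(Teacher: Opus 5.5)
\textbf{The gap.} Your reduction to a single base graph is sound. $\gamma$ and $\rho$ are additive over components, a disjoint union of bridgeless claw-free cubic graphs is again one, and the statement does not require connectivity. However, you never produce $G_0$, and its existence is the whole content of the theorem. ``Insert diamonds, use double edges, and tune by a small search'' is a plan for finding an example, not an argument. The heuristics behind it are also unreliable: inserting a diamond on an edge need not change $\gamma$ and $\rho$ by exactly one each, and $\rho(G)\ge\frac{n-2}{6}+\frac{|S(G)|}{30}$ is only a lower bound, so it cannot pin $\rho$ down. As written, nothing certifies that a graph with $\gamma=5$ and $\rho=4$ exists.

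\textbf{How to close it.} The candidate you rejected already works. Let $G_0$ be obtained from $H=K_{3,3}$, with sides $\{a_1,a_2,a_3\}$ and $\{b_1,b_2,b_3\}$, by replacing each vertex with a triangle. Write $(a_i,b_j)$ for the vertex of $t(a_i)$ adjacent to $t(b_j)$, and $(b_j,a_i)$ for its neighbour in $t(b_j)$. Your obstruction only shows that a packing vertex $(a_i,b_j)$ excludes all of $t(b_j)$. Nothing prevents two packing vertices from excluding the \emph{same} triangle, which is exactly what happens in
\[
P=\{(a_1,b_1),\,(a_2,b_1),\,(b_2,a_3),\,(b_3,a_3)\}.
\]
Its closed neighbourhoods are $t(a_1)\cup\{(b_1,a_1)\}$, $t(a_2)\cup\{(b_1,a_2)\}$, $t(b_2)\cup\{(a_3,b_2)\}$ and $t(b_3)\cup\{(a_3,b_3)\}$. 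These are pairwise disjoint, so $\rho(G_0)\ge 4$.

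Since $G_0$ is cubic on $18$ vertices, $\rho(G_0)\le\lfloor 18/4\rfloor=4$ and $\gamma(G_0)\ge\lceil 18/4\rceil=5$. The set $\{(b_1,a_1),(b_2,a_1),(b_3,a_1),(a_2,b_1),(a_3,b_1)\}$ dominates $G_0$, so $\gamma(G_0)=5$ and $\rho(G_0)=4$. More generally, for a triangle-replacement $G'$ of a cubic graph $H$ on $m$ vertices, $\rho(G')=m-\gamma(H)$: a set $U$ of triangles carries a packing iff every $u\in U$ has a neighbour outside $U$. This is the counterpart of the paper's $\gamma(G')=m-\rho(H)$, and for $K_{3,3}$ it gives $(6-1)/(6-2)=5/4$ directly. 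The truncated $K_{3,3}$ is cubic, claw-free and bridgeless, so your disjoint-union family then proves the statement.

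\textbf{Comparison with the paper.} The paper builds a connected family from $3t$ copies of an $8$-vertex graph $A$ arranged in a ring, and bounds $\rho(H)$ and $\rho(G)$ by pigeonhole. Your route gives only disconnected examples, but it certifies the ratio with two counting bounds and two explicit sets. It also avoids the paper's block counting, which needs care. If the preceding copy contributes $a_4$ and the following copy contributes $a_1$ to the packing, the middle copy can still contribute $b_2$, so the middle copy of a block need not be empty.
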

\begin{proof}
    We define the graph called $A$ as in Figure \ref{}
    and create $3t$ copies of $A$, say $A_1, A_2, \ldots A_{3t}$ for some positive integer $t$. After that, we join the last two vertices of each copy to the first two vertices of the subsequent copy. See Figure \ref{}.
    Let $H$ the resulting graph.
    After that, we replace each vertex with a triangle, as in the third option of Lemma \ref{lemma:claw-free-characterization}, creating a bridgeless claw-free cubic
    graph $G$ on $n$ vertices.

    We will use the notations used in the proof of Theorem \ref{theorem:cubic-claw-free}.
    Note that in this case, the set $S$ (vertices that belong to a 2-cycle in $H$), is empty. 
    Next we show, $\rho(H) \geq 4t$.
    Indeed, we can choose one every two vertices of the  top of $H$ (the set  formed by the $a_i$ vertices in each copy). As there are $12t$ such vertices, this packing have size $4t$.
    Now we show that $\rho(H) \leq 4t$.
    Suppose by contradiction that there exists a packing $P$  with $|P| > 4t$.
    For any $i \in \{1,\ldots, t\}$, let $B_i$
    the set of vertices that consists on the  union of the vertices of $A_{3i-2}$, $A_{3i-1}$ and $A_{3i}$. By pigeonhole principle, there exists an $i$
    such that $|B_i \cap P| \geq 5$. It is clear that $|A \cap P| \leq 2$,
    and that, if $|A \cap P| = 2$, then
    $A \cap P$ consists on one vertex in $\{a_1,b_1\}$ and one vertex in 
    $\{a_4,b_4\}$. Thus, $|A_{3i-2} \cap P|= |A_{3i} \cap P|=2$, but then 
    $A_{3i-1} \cap P = \emptyset$, a contradiction. We conclude that 
    $\rho(H) = 4t$ and, by Claim \ref{claim:gammaG'leqfracn3-Q}, $\gamma(G) = |V(H)|-4t= 20t$.

\input{Figures/cubic-claw-free-A}

    We now show that $\rho(G)=16t$.
    For this, we divide the set of $a_i$ vertices of $H$ into paths of size $3$.
    Suppose we have a path $xyz$.
    Then we select the vertices of $t(x)$ and $t(z)$ that are adjacent to a vertices in $t(y)$. Note that these vertices are at distance $3$ in $G$. Repeating this operation for any such path, we obtain a packing of size $8t$ for the $a_i$ vertices.
    We can join this packing with an analog packing of size $8t$ for the $b_i$ vertices of $G$. Indeed, an $a_i$ vertex will be at distance at least $3$ from a $b_i$ vertex. Hence  $\rho(G) \geq 16t$.
    Now, suppose for a moment that there exists a packing $P$ in $G$ with $|P|> 16t$.
    As there are $8t$ paths of size 3 in $H$, this implies that there exists a path, say $xyz$ such that $t(x)$, $t(y)$ and $t(z)$ have vertices in $P$. But then,  $t(y) \cap P$ is at distance at most $2$ from some vertex in $(t(u)\cup t(v)) \cap P$, a contradiction.
\end{proof}

%% file: Figures/cubic-claw-free-A.tex
\begin{figure}[ht]
\centering

\begin{subfigure}{0.28\textwidth}
\centering

\begin{tikzpicture}[scale=1,
  every node/.style={circle, draw, fill=black, inner sep=2pt}]

\node[label=above:$a_1$] (a1) at (0,1) {};
\node[label=above:$a_2$] (a2) at (1,1) {};
\node[label=above:$a_3$] (a3) at (2,1) {};
\node[label=above:$a_4$] (a4) at (3,1) {};

\node[label=below:$b_1$] (b1) at (0,0) {};
\node[label=below:$b_2$] (b2) at (1,0) {};
\node[label=below:$b_3$] (b3) at (2,0) {};
\node[label=below:$b_4$] (b4) at (3,0) {};

\draw (a1)--(a2)--(a3)--(a4);
\draw (b1)--(b2)--(b3)--(b4);

\draw (a1)--(b1);
\draw (a4)--(b4);

\draw (a2)--(b3);
\draw (b2)--(a3);

\end{tikzpicture}

\caption{}
\label{fig:small}
\end{subfigure}
\hfill
\begin{subfigure}{0.65\textwidth}
\centering

\begin{tikzpicture}[scale=1,
  every node/.style={circle, draw, fill=black, inner sep=2pt}]


\draw (-0.25,0.82) rectangle (2.25,1.18);
\draw (2.75,0.82) rectangle (4.75,1.18);
\draw (5.25,0.82) rectangle (7.25,1.18);
\draw (7.75,0.82) rectangle (10.25,1.18);


\draw (-0.25,-0.18) rectangle (2.25,0.18);
\draw (2.75,-0.18) rectangle (4.75,0.18);
\draw (5.25,-0.18) rectangle (7.25,0.18);
\draw (7.75,-0.18) rectangle (10.25,0.18);


\node[fill=white] (a11) at (0,1) {};
\node (a12) at (1,1) {};
\node (a13) at (2,1) {};
\node[fill=white] (a14) at (3,1) {};

\node (b11) at (0,0) {};
\node (b12) at (1,0) {};
\node (b13) at (2,0) {};
\node (b14) at (3,0) {};

\draw (a11)--(a12)--(a13)--(a14);
\draw (b11)--(b12)--(b13)--(b14);

\draw (a11)--(b11);
\draw (a14)--(b14);

\draw (a12)--(b13);
\draw (b12)--(a13);


\node (a21) at (3.5,1) {};
\node (a22) at (4.5,1) {};
\node (a23) at (5.5,1) {};
\node (a24) at (6.5,1) {};

\node (b21) at (3.5,0) {};
\node (b22) at (4.5,0) {};
\node (b23) at (5.5,0) {};
\node (b24) at (6.5,0) {};

\draw (a21)--(a22)--(a23)--(a24);
\draw (b21)--(b22)--(b23)--(b24);

\draw (a21)--(b21);
\draw (a24)--(b24);

\draw (a22)--(b23);
\draw (b22)--(a23);


\node (a31) at (7,1) {};
\node (a32) at (8,1) {};
\node (a33) at (9,1) {};
\node (a34) at (10,1) {};

\node (b31) at (7,0) {};
\node (b32) at (8,0) {};
\node (b33) at (9,0) {};
\node (b34) at (10,0) {};

\draw (a31)--(a32)--(a33)--(a34);
\draw (b31)--(b32)--(b33)--(b34);

\draw (a31)--(b31);
\draw (a34)--(b34);

\draw (a32)--(b33);
\draw (b32)--(a33);


\draw (a13)--(a21);
\draw (a14)--(a22);

\draw (b13)--(b21);
\draw (b14)--(b22);

\draw (a23)--(a31);
\draw (a24)--(a32);

\draw (b23)--(b31);
\draw (b24)--(b32);


\draw (a11) .. controls +(0,0.8) and +(0,0.8) .. (a34);
\draw (b11) .. controls +(0,-0.8) and +(0,-0.8) .. (b34);

\node[fill=white] (a14) at (3,1) {};
\node[fill=white] (a23) at (5.5,1) {};
\node[fill=white] (a32) at (8,1) {};

\end{tikzpicture}

\caption{}
\label{fig:large}
\end{subfigure}

\caption{(a) The graph A. (b) The graph $H$ constructed when $t=1$. White vertices form a packing in $H$.}
\label{fig:construction}
\end{figure}

%% file: refer.bib
@article{DBLP:journals/corr/abs-2503-05562,
  author       = {Marthe Bonamy and
                  M{\'{o}}nika Csik{\'{o}}s and
                  Anna Gujgiczer and
                  Yelena Yuditsky},
  title        = {On graph classes with constant domination-packing ratio},
  journal      = {CoRR},
  volume       = {abs/2503.05562},
  year         = {2025},
  url          = {https://doi.org/10.48550/arXiv.2503.05562},
  doi          = {10.48550/ARXIV.2503.05562},
  eprinttype    = {arXiv},
  eprint       = {2503.05562},
  bibsource    = {dblp computer science bibliography, https://dblp.org}
}

@article{Favaron1999,
title = {Signed domination in regular graphs},
journal = {Discrete Mathematics},
volume = {158},
number = {1},
pages = {287-293},
year = {1996},
issn = {0012-365X},
doi = {https://doi.org/10.1016/0012-365X(96)00026-X},
url = {https://www.sciencedirect.com/science/article/pii/0012365X9600026X},
author = {Odile Favaron}
}

@article{Oum2011, 
title={Perfect Matchings in Claw-free Cubic Graphs}, volume={18}, 
url={https://www.combinatorics.org/ojs/index.php/eljc/article/view/v18i1p62},
number={1},
journal={The Electronic Journal of Combinatorics},
author={Oum, Sang-il},
year={2011},
month={Mar.},
pages={P62}}

@article{Henning2011,
title = {Dominating sets, packings, and the maximum degree},
journal = {Discrete Mathematics},
volume = {311},
number = {18},
pages = {2031-2036},
year = {2011},
author = {Michael A. Henning and Christian Löwenstein and Dieter Rautenbach}
}

@misc{bonamy2025,
      title={On graph classes with constant domination-packing ratio}, 
      author={Marthe Bonamy and Mónika Csikós and Anna Gujgiczer and Yelena Yuditsky},
      year={2025},
      eprint={2503.05562},
      archivePrefix={arXiv},
      primaryClass={math.CO},
      url={https://arxiv.org/abs/2503.05562}, 
}

@misc{ducz2026,
      title={Domination and packing in graphs}, 
      author={Ákos Dúcz and Anna Gujgiczer},
      year={2026},
      eprint={2602.18402},
      archivePrefix={arXiv},
      primaryClass={math.CO},
      url={https://arxiv.org/abs/2602.18402}, 
}

@article {BrandstadtCD98,
    AUTHOR = {Brandst\"{a}dt, A. and Chepoi, V. and Dragan, F.},
     TITLE = {The algorithmic use of hypertree structure and maximum
              neighbourhood orderings},
   JOURNAL = {Discrete Appl. Math.},
  FJOURNAL = {Discrete Applied Mathematics. The Journal of Combinatorial
              Algorithms, Informatics and Computational Sciences},
    VOLUME = {82},
      YEAR = {1998},
    NUMBER = {1-3},
     PAGES = {43--77},
      ISSN = {0166-218X},
   MRCLASS = {68R10 (05C85)},
  MRNUMBER = {1610005},
MRREVIEWER = {Gary MacGillivray},
       DOI = {10.1016/S0166-218X(97)00125-X},
       URL = {https://doi.org/10.1016/S0166-218X(97)00125-X}
}

@article {Farber84,
    AUTHOR = {Farber, M.},
     TITLE = {Domination, independent domination, and duality in strongly
              chordal graphs},
   JOURNAL = {Discrete Appl. Math.},
  FJOURNAL = {Discrete Applied Mathematics. The Journal of Combinatorial
              Algorithms, Informatics and Computational Sciences},
    VOLUME = {7},
      YEAR = {1984},
    NUMBER = {2},
     PAGES = {115--130},
      ISSN = {0166-218X},
   MRCLASS = {05C35},
  MRNUMBER = {727918},}

@article {LowensteinRR13,
    AUTHOR = {L\"{o}wenstein, C. and Rautenbach, D. and Regen,
              F.},
     TITLE = {Chiraptophobic cockroaches evading a torch light},
   JOURNAL = {Ars Combin.},
  FJOURNAL = {Ars Combinatoria},
    VOLUME = {111},
      YEAR = {2013},
     PAGES = {181--192},
      ISSN = {0381-7032},
   MRCLASS = {05C69},
  MRNUMBER = {3100171},
}

@article {MeirM75,
    AUTHOR = {Meir, A. and Moon, J.},
     TITLE = {Relations between packing and covering numbers of a tree},
   JOURNAL = {Pacific J. Math.},
  FJOURNAL = {Pacific Journal of Mathematics},
    VOLUME = {61},
      YEAR = {1975},
    NUMBER = {1},
     PAGES = {225--233},
      ISSN = {0030-8730},
   MRCLASS = {05C05},
  MRNUMBER = {401519},
MRREVIEWER = {Heiko Harborth},}

@article{DBLP:journals/dm/ClarkCJ90,
  author       = {Brent N. Clark and
                  Charles J. Colbourn and
                  David S. Johnson},
  title        = {Unit disk graphs},
  journal      = {Discret. Math.},
  volume       = {86},
  number       = {1-3},
  pages        = {165--177},
  year         = {1990},
  url          = {https://doi.org/10.1016/0012-365X(90)90358-O},
  doi          = {10.1016/0012-365X(90)90358-O},
  bibsource    = {dblp computer science bibliography, https://dblp.org}
}

@article{Henning2026,
author = {Henning, Michael and Maniya, Paras and Pradhan, Dinabandhu},
year = {2026},
month = {03},
pages = {},
title = {Domination number versus packing number in graphs},
journal = {Discussiones Mathematicae Graph Theory},
doi = {10.7151/dmgt.2624}
}

@article{Gomez2025,
title = {Domination and packing in graphs},
journal = {Discrete Mathematics},
volume = {348},
number = {5},
pages = {114393},
year = {2025},
issn = {0012-365X},
doi = {https://doi.org/10.1016/j.disc.2025.114393},
url = {https://www.sciencedirect.com/science/article/pii/S0012365X25000019},
author = {Renzo Gómez and Juan Gutiérrez}
}
